\theoremstyle{plain}
\newtheorem{theorem}{Theorem}[section]
\newtheorem{lemma}{Lemma}[section]
\newtheorem{remark}[theorem]{Remark}
\numberwithin{equation}{section}
\numberwithin{lemma}{section}
\title{Extended Sampling Method in Inverse Scattering}
\author{Juan Liu\footnotemark[1] and Jiguang Sun\footnotemark[2]}
\date{}
\begin{document}
\footnotetext[1]{Department of Mathematical Sciences, Jinan University, Guangzhou, 130012, China.
E-mail: \texttt{liujuan@jnu.edu.cn}}
\footnotetext[2]{Department of Mathematical Sciences, Michigan Technological University, Houghton, MI 49931, U.S.A.
and School of Mathematical Sciences, University of Electronic Science and Technology of China, Chengdu, 611731, China.
E-mail: \texttt{jiguangs@mtu.edu}}
\maketitle
\begin{abstract}
A new sampling method for inverse scattering problems is proposed to process far field data of one incident wave.
As the linear sampling method, the method sets up ill-posed integral equations and uses the (approximate) solutions to reconstruct the target. 
In contrast, the kernels of the associated integral operators are the far field patterns of sound soft balls.
The measured data is moved to right hand sides of the equations, which gives the method the ability to process limit aperture data.
Furthermore, a multilevel technique is employed to improve the reconstruction. 
Numerical examples show that the method can effectively determine the location and approximate the support with little a priori information of the unknown target.  
\end{abstract}

\section{Introduction}
The inverse scattering theory has been an active research area for more than thirty years and is still developing \cite{ColtonKress2009IPI}.
Various sampling methods have been proposed to reconstruct the location and support of the unknown scatterer, e.g., the linear sampling method, the factorization method, the reciprocity gap method \cite{ColtonKirsch1996IP, Kirsch1998IP, KirschGrinberg2008, Potthast2006IP, ColtonHaddar2005IP, DiCristoSun2006IP, MonkSun2007IPI, ColtonKress2013, CakoniColton2014}. Using the scattering data (far field pattern or near field data), these methods solve some linear ill-posed integral equations for each point in the sampling domain containing the target. The regularized solutions are used as indicators for the sampling points and the support of the scatterer is reconstructed accordingly.

Classical sampling methods use full aperture data to set up the linear ill-posed integral equations. For example, the linear sampling method (LSM) uses the far field patterns of all scattering directions for plane incident waves of all directions. In this paper, we propose a new method, the extended sampling method (ESM), to reconstruct the location and approximate the support of the scatterer using much less data. Similar to the LSM, at each sampling point in a domain containing the target, the ESM solves a linear ill-posed integral equation. However, the kernel of the integral operator is the far field pattern of a sound soft ball with known center and radius rather than the measured far field pattern of the unknown scatterer. The measured far field pattern is moved to the right hand side of the integral equation. As a consequence, the ESM can treat limit aperture data naturally. 
Furthermore, the ESM is independent of the wave numbers, i.e., the ESM works for all wavenumbers. In contrast, the LSM needs to exclude wavenumbers which are certain eigenvalues related to the scatterer.

In recent years, some direct methods were proposed to reconstruct the scatterer using far field pattern of one incident wave, see, e.g., 
\cite{PotthastSylvestrKusiak2003IP, Potthast2010IP, ItoJinZou2012IP, Liu2017IP, LiuSun2018,LiuDai2015JCAM}.
The ESM is different from these methods in the sense that it is based on the classical sampling method. 
Furthermore, the behavior of the solutions of the linear ill-posed integral equations can be theoretically justified. 

The rest of the paper is arranged as follows. In Section \ref{SPFFP}, we introduce the scattering problems and the far field pattern. In addition, we give the far field pattern of the scattered field due to sound soft balls, which serves as the kernel the integral operator. In Section \ref{ESM}, the ESM based on a new far field equation is presented. The behavior of the solutions is analyzed.  In Section \ref{NEs}, numerical examples are provided to show the effectiveness of the proposed method. Finally, we draw some conclusions and discuss future works in Section \ref{CF}. For simplicity, the presentation is restricted to 2D. The extension to 3D is straightforward.

\section{Scattering Problems and Far Field Pattern}\label{SPFFP}
We consider the acoustic obstacle and medium scattering problems in $\mathds{R}^2$. 
Let $D\subset \mathds{R}^2$ be a bounded domain with a $C^2$ boundary. We denote by $u^{i}(x):={\rm e}^{i kx\cdot d}, x, d \in \mathds{R}^2$ the incident plane 
wave, where $|d|=1$ is the direction and $k>0$ is the wavenumber. The scattering problem is to find the scattered field $u^s$ or the total field $u=u^i+u^s$ such that
\begin{equation}\label{direct1}
  \left\{
   \begin{array}{ll}
   &\Delta u+k^2 u=0,\ \ \ \textrm{in}\  \mathds{R}^2\setminus \overline{D}, \\
   &\lim\limits_{{r}\rightarrow\infty}\sqrt{r}(\partial u^s/\partial r-{i}ku^s)=0,
   \end{array}\right.
\end{equation}
where $r=|x|$. For the well-posedness of the above direct scattering problem, one needs to impose suitable conditions on $\partial D$,
which depend on the physical properties of the scatterer. The total field in \eqref{direct1}
satisfies
\begin{itemize}
\item[1)] the Dirichlet boundary condition
\begin{equation*}
u=0, \ \ \ \textrm{on}\  \partial D,
\end{equation*}
for a sound-soft obstacle;
\item[2)] the Neumann boundary condition
\begin{equation*}
\frac{\partial u}{\partial \nu}=0, \ \ \ \textrm{on}\  \partial D,
\end{equation*}
for a sound-hard obstacle,
where $\nu$ is the unit outward normal to $\partial D$; or
\item[3)] the impedance boundary condition
\begin{equation*}
\frac{\partial u}{\partial \nu}(x) +{i}\lambda u(x)=0,\ \ \ \textrm{on}\  \partial D,
\end{equation*}
for an impedance obstacle with some real-valued parameter $\lambda\geq 0$.
\end{itemize}

The scattering problem of time-harmonic acoustic waves for an inhomogeneous medium is modeled by
\begin{equation}\label{direct}
  \left\{
   \begin{array}{lll}
   &\Delta u+k^2n(x) u=0,\ \ \ \textrm{in}\  \mathds{R}^2 \\
   &u=u^i+u^s, \\
   &\lim\limits_{{r}\rightarrow\infty}\sqrt{r}(\partial u^s/\partial r-{i}ku^s)=0,
   \end{array}\right.
\end{equation}
where $n(x):=n_1(x)+i\frac{n_2(x)}{k}, n_1(x)>0, n_2(x)\geq 0$ is the refractive index. We assume that $n(x) \ne 1$ in $D$ and $n(x)=1$ in $\mathds{R}^2\setminus \overline{D}$.

It is known that the scattered fields $u^s$ of the above problems have an asymptotic expansion
\[
u^s(x)
=\frac{e^{i \frac{\pi}{4}}}{\sqrt{8k\pi}}\frac{e^{i kr}}{\sqrt{r}}\left\{u_{\infty}(\hat{x})+\mathcal{O}\left(\frac{1}{r}\right)\right\}\quad\mbox{as }\,r:=|x|\rightarrow\infty
\]
uniformly in all directions $\hat{x}=x/|x|$. The function $u_\infty(\hat{x})$ defined on $\mathds{S}:=\{\hat{x} | \hat{x} \in \mathds{R}^2, |\hat{x}|=1\}$ is known as the far-field pattern of $u^s$ due to
the incident field $u^i$.

The rest of this section is devoted to the far field pattern of a sound-soft disc, which serves as the kernel of the integral operator in the next section. Let $B \subset \mathds{R}^{2}$ be a disc centered at the origin with radius $R$. The far-field pattern corresponding to the incident plane wave with direction $d$ is given by
\cite{AbramowitzStegun1965}:
\begin{flalign}\label{uinf}
     u^B_{\infty}(\hat{x}; d)=-e^{-i \frac{\pi}{4}}\sqrt{\frac{2}{\pi k}}\bigg[\frac{J_0(kR)}{H_0^{(1)}(kR)}+2\sum_{n=1}^{\infty}\frac{J_n(kR)}{H_n^{(1)}(kR)}\cos(n\theta)\bigg], 
     \quad  \hat{x}\in \mathds{S},
  \end{flalign}
where $J_n$ is the Bessel function, $H_{n}^{(1)}$ is the Hankel function of the first kind of order $n$, $\theta =\angle (\hat{x},d)$, the angle
between $\hat{x}$ and $d$. Let 
\[
B_z:=\{x+z; x\in B, z \in \mathds{R}^2\}
\] 
be the shifted disc of $B$ and $u_{\infty,z}(\hat{x}; d)$ be the far field pattern of $B_z$. Then the translation property holds \cite{LiuZhang2017CMR, ZhangZhang2017JCP}:
\begin{equation}\label{shift}
u^{B_z}_{\infty}(\hat{x}; d)=e^{i kz\cdot (d-\hat{x})}u^B_{\infty}(\hat{x},d),\ \ \ \ \hat{x}\in\mathds{S}.
\end{equation}

\section{The Extended Sampling Method}\label{ESM}
The inverse scattering problem of interests is to reconstruct the location and approximate support of the scatterer without knowing the physical properties of the scatterer. We propose a novel method, called the Extended Sampling Method (ESM). It can process limited aperture far field pattern, full aperture far field pattern, or far field pattern of multiple frequencies.

In this paper, we mainly focus on the case when the far-field pattern is available of all observation directions but only one single incident wave, i.e., $u_\infty(\hat{x}; d_0)$ for all $\hat{x} \in \mathds{S}$ and a fixed $d_0 \in \mathds{S}$. The unique determination of the scatterer by the far field pattern of one single incident wave is a long-standing open problem in the inverse scattering theory. The answer is only partially known for some special  scatterers. Some numerical methods have been proposed in the past few years, e.g., the range test method \cite{PotthastSylvestrKusiak2003IP}, the no-response method \cite{Potthast2006IP}, the orthogonality sampling method \cite{Potthast2010IP}, the direct sampling method \cite{ItoJinZou2012IP}, and the iterative decomposition method \cite{LiuDai2015JCAM}.

\subsection{The Far Field Equation}
Let $u^{B_z}_{\infty}(\hat{x}; d), x\in \mathds{S}$ be the far-field pattern of the sound-soft disc $B_z$ centered at $z$ with radius $R$
for plane incident waves of all directions $d\in \mathds{S}$.
Define a far field operator $\mathcal{F}_z: L^2(\mathds{S}) \to L^2(\mathds{S})$
\begin{equation}\label{FO}
\mathcal{F}_z g(\hat{x}) = \int_{\mathds{S}}u^{B_z}_{\infty}(\hat{x},d)g(d)d s(d), \quad \hat{x} \in \mathds{S}.
\end{equation}

Let $U^s(x)$ and $U_\infty(\hat{x})$ be the scattered field and far field pattern of the scatterer $D$ due to one incident wave, respectively.
Using the far field operator $\mathcal{F}_z$, we set up a far field equation
\begin{equation}\label{fe}
\left(\mathcal{F}_z g\right)(\hat{x})=U_\infty(\hat{x}),\ \ \ \ \hat{x}\in \mathds{S}.
\end{equation}
This integral equation is the main ingredient of the ESM. 
The advantage of using $\mathcal{F}_z$ is that it can be computed easily while the classical far field operator uses full-aperture measured far field pattern.

We expect that the (approximate) solution of \eqref{fe} for a sampling point $z$
would provide useful information for the reconstruction of $D$.
To this end, we first introduce some results that are useful in analyzing the far field equation \eqref{fe} (see Corollary 5.31, Corollary 5.32 and Theorem 3.22 of \cite{ColtonKress2013}).
\begin{lemma}\label{lemma1}
The Herglotz operator $\mathcal{H}: L^2(\mathds{S})\rightarrow H^{{1}/{2}}(\partial B_z)$ defined by
\begin{equation}
(\mathcal{H}g)(x):=\int_{\mathds{S}} e^{ikx\cdot d}g(d)ds(d),\ \ \ \ x\in \partial B_z
\end{equation}
is injective and has a dense range provided $k^2$ is not a Dirichlet eigenvalue for the negative Laplacian for $B_z$.
\end{lemma}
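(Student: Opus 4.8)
The plan is to establish the two assertions — injectivity and density of range — separately, in each case reducing the claim to the interior Dirichlet problem for $B_z$ and invoking the hypothesis that $k^2$ is not a Dirichlet eigenvalue for $B_z$.

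For \textbf{injectivity}, I would observe that for $g\in L^2(\mathds{S})$ the Herglotz wave function
\[
v_g(x):=\int_{\mathds{S}} e^{ikx\cdot d}\,g(d)\,ds(d),\qquad x\in\mathds{R}^2,
\]
is an entire solution of $\Delta v_g+k^2 v_g=0$, belongs to $H^1(B_z)$ with $\|v_g\|_{H^1(B_z)}\le C\,\|g\|_{L^2(\mathds{S})}$, and has trace $\mathcal{H}g$ on $\partial B_z$; in particular, the trace theorem already shows that $\mathcal{H}$ maps $L^2(\mathds{S})$ boundedly into $H^{1/2}(\partial B_z)$. If $\mathcal{H}g=0$, then $v_g$ solves the homogeneous interior Dirichlet problem in $B_z$, so $v_g\equiv 0$ in $B_z$ because $k^2$ is not a Dirichlet eigenvalue; by real-analyticity of $v_g$ (or unique continuation) it follows that $v_g\equiv 0$ in all of $\mathds{R}^2$. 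Since the Herglotz kernel is uniquely determined by its wave function — expanding $g$ in a Fourier series on $\mathds{S}$ and $e^{ikx\cdot d}$ via the Jacobi--Anger formula, the $n$-th angular mode of $v_g$, as a function of $|x|$, equals a nonzero multiple of $J_n(k|x|)$ times the $n$-th Fourier coefficient of $g$ — the vanishing of $v_g$ forces $g=0$.

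For \textbf{density of the range}, I would argue by duality: it suffices to show that any $\varphi\in H^{-1/2}(\partial B_z)$ annihilating the range of $\mathcal{H}$ must vanish. Interchanging the order of integration in the pairing $\langle \mathcal{H}g,\varphi\rangle$, this annihilation is equivalent to
\[
\int_{\partial B_z} e^{ikx\cdot d}\,\varphi(x)\,ds(x)=0\qquad\text{for every } d\in\mathds{S}.
\]
Up to a fixed nonzero constant, the left-hand side is the far field pattern of the single-layer potential $w(y):=\int_{\partial B_z}\Phi(y,x)\,\varphi(x)\,ds(x)$, where $\Phi$ is the radiating fundamental solution of the Helmholtz equation in $\mathds{R}^2$. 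Hence the far field of $w$ vanishes, and by Rellich's lemma together with unique continuation $w\equiv 0$ in the unbounded component $\mathds{R}^2\setminus\overline{B_z}$. Because the single-layer potential is continuous across $\partial B_z$, the interior trace of $w$ also vanishes, so $w$ solves the homogeneous interior Dirichlet problem in $B_z$; since $k^2$ is not a Dirichlet eigenvalue, $w\equiv 0$ in $B_z$ as well. The jump relation for the normal derivative of the single-layer potential then gives $\varphi=\partial_\nu w^{-}-\partial_\nu w^{+}=0$, completing the argument.

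The genuinely routine ingredients — boundedness of $\mathcal{H}$ into $H^{1/2}(\partial B_z)$, the mapping properties and jump relations of the single-layer potential acting on $H^{-1/2}$ densities, Rellich's lemma, and the exact form of the duality pairing — I would simply cite (these are all contained in \cite{ColtonKress2013}). The hypothesis on $k^2$ enters only through the reduction to the interior Dirichlet problem, once in each half of the proof. The step I expect to be most delicate is in the density part: one must use the \emph{continuity} of the single-layer potential across $\partial B_z$ to convert the vanishing exterior trace into a vanishing interior trace \emph{before} the non-resonance assumption can be applied and the normal-derivative jump invoked.
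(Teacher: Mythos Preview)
Your proof is correct. Note, however, that the paper does not actually supply a proof of this lemma: it is stated as a known result with a reference to Corollaries~5.31--5.32 and Theorem~3.22 of \cite{ColtonKress2013}. What you have written is essentially the standard argument one finds there --- injectivity via the vanishing Herglotz wave function and Jacobi--Anger, density via the dual single-layer potential, Rellich, and the jump relations --- so there is nothing to compare beyond observing that you have filled in what the paper leaves to citation. One small cosmetic point: in the density step, the far-field kernel of the single-layer potential carries $e^{-ik\hat{y}\cdot x}$ rather than $e^{+ikx\cdot d}$, but since the annihilation condition holds for \emph{all} $d\in\mathds{S}$ the sign is absorbed by the substitution $d\mapsto -\hat{y}$, and your conclusion stands.
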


\begin{lemma}\label{lemma2}
The operator $A:H^{1/2}(\partial B_z)\rightarrow L^2(\mathds{S})$ which maps the boundary values of radiating solutions $u\in H_{loc}^1(\mathds{R}^2\setminus\overline{B}_z)$ of the Helmholtz equation onto the far field pattern $u_\infty$ is bounded, injective and has a dense range.
\end{lemma}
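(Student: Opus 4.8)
The plan is to verify the three properties --- boundedness, injectivity, and density of the range --- one at a time, using the classical solvability theory of the exterior Dirichlet problem as a black box. First I would note that $A$ is well defined: for each $f\in H^{1/2}(\partial B_z)$ the exterior Dirichlet problem
\[
\Delta u+k^2u=0 \ \text{in}\ \mathds{R}^2\setminus\overline{B}_z,\qquad u=f\ \text{on}\ \partial B_z,
\]
supplemented by the Sommerfeld radiation condition, has a unique radiating solution $u\in H_{loc}^1(\mathds{R}^2\setminus\overline{B}_z)$ depending continuously on $f$ --- uniqueness by Rellich's lemma together with the radiation condition, existence and stability by a combined single- and double-layer potential ansatz and Riesz--Fredholm theory. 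Boundedness of $A$ then follows by composing the bounded solution operator $f\mapsto u$ with the bounded far-field map $u\mapsto u_\infty$, the latter being continuous because $u_\infty$ is expressed through Green's representation formula in terms of the Cauchy data of $u$ on a fixed circle enclosing $\overline{B}_z$.

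For injectivity, suppose $Af=u_\infty=0$. Rellich's lemma forces the radiating solution $u$ to vanish throughout the exterior $\mathds{R}^2\setminus\overline{B}_z$, and hence its boundary trace $f=u|_{\partial B_z}$ vanishes as well; so $A$ is injective.

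The substantive part is the density of the range, which I would obtain by a duality argument. Let $g\in L^2(\mathds{S})$ satisfy $\langle Af,g\rangle_{L^2(\mathds{S})}=0$ for all $f\in H^{1/2}(\partial B_z)$; the goal is to show $g=0$. The key point is that for each source point $y\in B_z$ the fundamental solution $\Phi(\cdot,y)$ is a radiating solution of the Helmholtz equation in $\mathds{R}^2\setminus\overline{B}_z$ whose trace on $\partial B_z$ is smooth, hence lies in $H^{1/2}(\partial B_z)$, and whose far field pattern equals $c\,e^{-ik\hat x\cdot y}$ for a fixed nonzero constant $c$. Consequently every function $\hat x\mapsto e^{-ik\hat x\cdot y}$ with $y\in B_z$ belongs to the range of $A$, and the orthogonality assumption yields $\int_{\mathds{S}}e^{-ik\hat x\cdot y}\,\overline{g(\hat x)}\,ds(\hat x)=0$ for all $y\in B_z$. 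As a function of $y$ this is, up to conjugation, a Herglotz wave function --- a real-analytic entire solution of the Helmholtz equation in $\mathds{R}^2$ --- so, vanishing on the nonempty open set $B_z$, it vanishes identically, and injectivity of the Herglotz operator on all of $\mathds{R}^2$ then gives $g=0$.

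The hard part will be the care needed in the density step: one must check that the point-source far field patterns really belong to the range of $A$ --- which is why $y$ is taken in the open disc $B_z$, so that $\Phi(\cdot,y)$ is regular up to $\partial B_z$ --- and one must keep the complex conjugations consistent so that the orthogonality condition genuinely reduces to the vanishing of a Herglotz wave function. The remaining steps are routine invocations of the standard exterior-problem theory; indeed the assertion coincides with Corollary 5.31, Corollary 5.32 and Theorem 3.22 of \cite{ColtonKress2013}, so a shorter alternative is simply to quote those results.
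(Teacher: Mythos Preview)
Your proposal is correct. The paper itself does not prove this lemma at all: it simply states it and attributes it to Corollary~5.31, Corollary~5.32 and Theorem~3.22 of \cite{ColtonKress2013}, exactly the references you mention in your last sentence. Your sketch supplies the standard arguments behind those citations --- well-posedness of the exterior Dirichlet problem for boundedness, Rellich's lemma for injectivity, and the duality argument via point-source far fields and Herglotz injectivity for dense range --- so you are effectively filling in what the paper deliberately leaves as a black-box reference.
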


\begin{lemma}\label{lemma3}
The far field operator $\mathcal{F}_z$ is injective and has dense range provided $k^2$ is not a Dirichlet eigenvalue for the negative Laplacian for $B_z$.
\end{lemma}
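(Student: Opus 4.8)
The plan is to factor the far field operator $\mathcal{F}_z$ through the two maps already introduced, namely the Herglotz operator $\mathcal{H}$ of \autoref{lemma1} and the boundary-data-to-far-field operator $A$ of \autoref{lemma2}, and then deduce injectivity and dense range of $\mathcal{F}_z$ from the corresponding properties of $\mathcal{H}$ and $A$. Concretely, for $g\in L^2(\mathds{S})$ the function $v_g(x):=(\mathcal{H}g)(x)=\int_{\mathds{S}}e^{ikx\cdot d}g(d)\,ds(d)$ is a Herglotz wave function, hence an entire solution of the Helmholtz equation; by superposition over the incident directions $d$, $\mathcal{F}_z g$ is precisely the far field pattern of the scattered field $w_g$ corresponding to the incident field $v_g$ scattered by the sound-soft disc $B_z$. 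Since $w_g$ is the radiating solution of the exterior Dirichlet problem with boundary data $-v_g|_{\partial B_z}$, we obtain the factorization $\mathcal{F}_z = -\,A\circ S\circ \mathcal{H}$, where $S\colon H^{1/2}(\partial B_z)\to H^{1/2}(\partial B_z)$ sends Dirichlet data $f$ to the boundary values of the radiating solution of the exterior Dirichlet problem for $B_z$ with data $f$. The first step is therefore to write this factorization carefully and to note that $S$ is an isomorphism of $H^{1/2}(\partial B_z)$ onto itself (well-posedness of the exterior Dirichlet problem for the Helmholtz equation on a $C^2$ domain gives existence, uniqueness, and bounded invertibility).

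The second step is to read off the conclusion. For injectivity: suppose $\mathcal{F}_z g = 0$. Since $A$ is injective by \autoref{lemma2}, the corresponding radiating solution $w_g$ vanishes, so its boundary values vanish, so $S\mathcal{H}g=0$; as $S$ is injective this forces $\mathcal{H}g = 0$ on $\partial B_z$, and then injectivity of $\mathcal{H}$ (\autoref{lemma1}, using the hypothesis on $k^2$) gives $g=0$. For dense range: $A$ has dense range in $L^2(\mathds{S})$ and $S$ is onto $H^{1/2}(\partial B_z)$, so $\overline{\mathrm{Range}(\mathcal{F}_z)} = \overline{A\big(S(\mathrm{Range}\,\mathcal{H})\big)} = \overline{A\big(\mathrm{Range}\,\mathcal{H}\big)}$; since $\mathcal{H}$ has dense range in $H^{1/2}(\partial B_z)$ and $A$ is bounded, $A(\mathrm{Range}\,\mathcal{H})$ is dense in $A\big(H^{1/2}(\partial B_z)\big)$, whose closure equals $L^2(\mathds{S})$ by the dense range of $A$. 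Hence $\mathcal{F}_z$ has dense range.

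The main obstacle — really the only non-bookkeeping point — is justifying the factorization $\mathcal{F}_z=-A\circ S\circ\mathcal{H}$ rigorously: one must verify that interchanging the integral over incident directions $d$ with the solution operator of the scattering problem is legitimate (so that the scattered field for the incident Herglotz wave $v_g$ is indeed $\int_{\mathds{S}}u^s(\cdot;d)g(d)\,ds(d)$ and its far field pattern is $\int_{\mathds{S}}u^{B_z}_\infty(\cdot;d)g(d)\,ds(d)=\mathcal{F}_z g$), and that $v_g|_{\partial B_z}$ lies in $H^{1/2}(\partial B_z)$ with continuous dependence on $g$. This is standard (it is essentially the content of the references to Colton–Kress cited before the lemma, and relies only on boundedness of $\mathcal{H}$ into $H^{1/2}(\partial B_z)$ and linearity/continuity of the scattering map), so once it is in place the rest is immediate. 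I would also remark that the Dirichlet-eigenvalue hypothesis enters solely through \autoref{lemma1}: it is exactly the condition under which $\mathcal{H}$ is injective with dense range, and for a disc $B_z$ of radius $R$ this simply excludes the countable set $\{k : J_n(kR)=0 \text{ for some } n\ge 0\}$.
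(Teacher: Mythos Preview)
Your argument is correct. One small redundancy: the map $S$ you introduce is, by your own definition, the identity on $H^{1/2}(\partial B_z)$ --- the boundary trace of the radiating solution of the exterior Dirichlet problem with data $f$ is $f$ itself --- so the factorization collapses to $\mathcal{F}_z=-A\circ\mathcal{H}$. With that simplification the injectivity and dense-range arguments go through exactly as you wrote them.

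The paper does not supply a proof of this lemma; it simply quotes Theorem~3.22 of Colton--Kress \cite{ColtonKress2013}. Your factorization is nonetheless fully in the spirit of the paper: the identity $\mathcal{F}_z g = A(-\mathcal{H}g)$ is precisely equation~\eqref{equ2.2}, used in the proof of \autoref{theorem1}. For comparison, the textbook argument in \cite{ColtonKress2013} handles the dense-range half differently, deducing it from injectivity of the adjoint $\mathcal{F}_z^{*}$ via the reciprocity relation $u^{B_z}_\infty(\hat x; d)=u^{B_z}_\infty(-d;-\hat x)$. Both routes are short once Lemmas~\ref{lemma1} and~\ref{lemma2} are available; yours has the virtue of making explicit that the Dirichlet-eigenvalue hypothesis enters only through the properties of $\mathcal{H}$.
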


The following theorem is the main result for the far field equation \eqref{fe}.
\begin{theorem}\label{theorem1}
Let $B_z$ be a sound soft disc centered at $z$ with radius $R$. Let $D$ be an inhomogeneous medium or an obstacle with the Dirichlet, Neumann, or impedance boundary condition. Assume that $kR$ does not coincide with any zero of the Bessel functions $J_n, n=0,1,2,\cdots$. Then the following results hold for the far field equation (\ref{fe}):
\begin{itemize}
\item[1.] If $D\subset B_z$, for a given $\varepsilon>0$, there exists a function $g_z^\varepsilon\in L^2(\mathds{S})$ such that
\begin{equation}\label{fe2}
\bigg\|\int_{\mathds{S}}u^{B_z}_{\infty}(\hat{x},d)g_z^\varepsilon(d)d s(d)-U_\infty(\hat{x})\bigg\|_{L^2(\mathds{S})}<\varepsilon
\end{equation}
and the Herglotz wave function $v_{g_z^\varepsilon}(x):=\int_{\partial B_z}e^{ikx\cdot d}g_z^\varepsilon (d)ds(d), x\in B_z$ converges to the solution $w\in H^1(B_z)$ of the Helmholtz equation with $w=-U^s$ on $\partial B_z$ as $\varepsilon\rightarrow 0$.

\item[2.] If $D\cap B_z=\emptyset$, every $g_z^\varepsilon\in L^2(\mathds{S})$ that satisfies (\ref{fe2}) for a given $\varepsilon>0$ is such that
\begin{equation}
\lim_{\varepsilon\rightarrow 0}\|v_{g_z^\varepsilon}\|_{H^1(B_z)}=\infty.
\end{equation}

\item[3.] If $D\cap B_z\neq\emptyset$ and $D \not\subset B_z$. When the scattered solution of $D$ can be extended from $\mathds{R}^2\setminus D$ into $\mathds{R}^2\setminus (D\cap B_z)$, then the conclusion is the same with Case 1; Otherwise, the conclusion is the same with Case 2.

\end{itemize}
\end{theorem}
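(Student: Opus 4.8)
The plan is to express the far field equation \eqref{fe} through the factorization $\mathcal{F}_z=-A\mathcal{H}$, where $\mathcal{H}$ and $A$ are the operators of Lemmas~\ref{lemma1} and~\ref{lemma2}: for $g\in L^2(\mathds{S})$, $\mathcal{F}_z g$ is the far field pattern of the field scattered by the sound soft disc $B_z$ when the Herglotz wave function $v_g$ is incident, and that scattered field is the unique radiating solution with Dirichlet data $-v_g|_{\partial B_z}=-\mathcal{H}g$ on $\partial B_z$. Since the Dirichlet eigenvalues of $-\Delta$ on a disc of radius $R$ are exactly the numbers $(j_{n,m}/R)^2$ with $j_{n,m}$ the positive zeros of $J_n$, the hypothesis on $kR$ guarantees that $k^2$ is not a Dirichlet eigenvalue of $B_z$, so Lemmas~\ref{lemma1}--\ref{lemma3} apply and, in addition, the interior Dirichlet problem for $B_z$ is well posed. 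The central observation I would isolate is: $U_\infty\in\mathrm{Range}(A)$ if and only if $U^s$ extends to a radiating solution of the Helmholtz equation in $\mathds{R}^2\setminus\overline{B_z}$. The ``if'' direction is immediate because such an extension has the same far field pattern; for ``only if'', write $U_\infty=A\phi$, let $p$ be the radiating solution in $\mathds{R}^2\setminus\overline{B_z}$ with $p|_{\partial B_z}=\phi$, observe that $p$ and $U^s$ share the far field pattern $U_\infty$, and use Rellich's lemma together with unique continuation to conclude $p=U^s$ on the unbounded component of $\mathds{R}^2\setminus(\overline D\cup\overline{B_z})$, which is the required extension.

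I would then prove two master implications. (i) If $U_\infty\in\mathrm{Range}(A)$, say $U_\infty=A(\widetilde U^s|_{\partial B_z})$ with $\widetilde U^s$ the extension, then since $\mathrm{Range}(\mathcal{H})$ is dense in $H^{1/2}(\partial B_z)$ one may pick $g_z^\varepsilon$ with $\|\mathcal{H}g_z^\varepsilon+\widetilde U^s|_{\partial B_z}\|_{H^{1/2}(\partial B_z)}$ arbitrarily small; boundedness of $A$ then yields \eqref{fe2}, and $\mathcal{H}g_z^\varepsilon\to-\widetilde U^s|_{\partial B_z}$ combined with well-posedness of the interior Dirichlet problem for $B_z$ forces the Herglotz wave function $v_{g_z^\varepsilon}$ to converge in $H^1(B_z)$ to the solution $w$ of the Helmholtz equation in $B_z$ with $w=-U^s$ on $\partial B_z$. (ii) If $U_\infty\notin\mathrm{Range}(A)$ and $g_z^\varepsilon$ satisfies \eqref{fe2}, suppose for contradiction that $\|v_{g_z^{\varepsilon_n}}\|_{H^1(B_z)}$ stays bounded along some $\varepsilon_n\to0$; then the traces $\mathcal{H}g_z^{\varepsilon_n}$ are bounded in $H^{1/2}(\partial B_z)$, so a subsequence satisfies $\mathcal{H}g_z^{\varepsilon_n}\rightharpoonup\psi$ weakly, whence $\mathcal{F}_z g_z^{\varepsilon_n}=-A\mathcal{H}g_z^{\varepsilon_n}\rightharpoonup-A\psi$ by weak continuity of $A$; comparing with the strong limit $U_\infty$ of the left-hand side gives $U_\infty=-A\psi\in\mathrm{Range}(A)$, a contradiction, so $\|v_{g_z^\varepsilon}\|_{H^1(B_z)}\to\infty$.

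It remains to decide, in each case, which side of the range dichotomy we are on. If $D\subset B_z$ then $\mathds{R}^2\setminus\overline{B_z}\subset\mathds{R}^2\setminus\overline D$, so $U^s$ is already a radiating solution there; thus $U_\infty\in\mathrm{Range}(A)$ and Case~1 follows from (i). If $D\cap B_z=\emptyset$, I claim $U^s$ cannot extend to $\mathds{R}^2\setminus\overline{B_z}$: if it did, then because $\overline D$ and $\overline{B_z}$ are disjoint the extension glues, via $H^1_{loc}$ elliptic regularity and unique continuation, with $U^s$ to an entire radiating solution of the Helmholtz equation, which must vanish identically by a Green's identity over large circles together with Rellich's lemma; but $U^s\equiv0$ is impossible, since then the plane wave $e^{ikx\cdot d_0}$ (which is nowhere zero and whose normal derivative cannot have constant sign direction along a closed $C^2$ curve) would satisfy the homogeneous Dirichlet, Neumann, or impedance relation on $\partial D$, or $n\equiv1$ would hold in $D$ in the medium case. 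Hence $U_\infty\notin\mathrm{Range}(A)$ and Case~2 follows from (ii). Finally, in Case~3 the stated alternative is precisely whether $U^s$ extends across $\partial D\cap B_z$ into $D\setminus\overline{B_z}$, i.e.\ whether $U^s$ extends to a radiating solution in $\mathds{R}^2\setminus\overline{B_z}$, i.e.\ whether $U_\infty\in\mathrm{Range}(A)$; so the first sub-case follows from (i) and the second from (ii).

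The step I expect to be the main obstacle is the range lemma together with the Case~2 gluing argument: one must track the connected components of $\mathds{R}^2\setminus(\overline D\cup\overline{B_z})$ carefully when passing from the local identity furnished by Rellich's lemma to a globally defined entire solution, and one must dispose of the standard but boundary-condition-dependent fact that $U_\infty$ never vanishes identically for a genuine scatterer illuminated by a plane wave.
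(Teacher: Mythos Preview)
Your proposal is correct and follows the same underlying mechanism as the paper's proof: the factorization $\mathcal{F}_z=-A\mathcal{H}$, density of $\mathcal{H}$ (Lemma~\ref{lemma1}) together with boundedness of $A$ (Lemma~\ref{lemma2}) for the approximation in Case~1, and a contradiction via Rellich's lemma producing an entire radiating solution in Case~2. The paper carries this out case by case without isolating your ``range dichotomy'' $U_\infty\in\mathrm{Range}(A)$ as a separate lemma; your reorganization makes Case~3 a one-line consequence, whereas the paper simply points back to Cases~1 and~2. One genuine improvement in your write-up over the paper's is the handling of the weak limit in Case~2: the paper tacitly assumes that the weak $H^1(B_z)$ limit of the $v_{g_z^{\varepsilon_n}}$ is again a Herglotz wave function $v_{g_z}$ with some density $g_z\in L^2(\mathds{S})$, which is not automatic; you avoid this by passing to weak limits of the traces $\mathcal{H}g_z^{\varepsilon_n}$ in $H^{1/2}(\partial B_z)$ and invoking weak continuity of $A$, which is cleaner. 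Your closing caveat about tracking connected components when gluing in Case~2 is apt; the paper handles this informally as well, and in both versions the implicit hypothesis is that $U_\infty\not\equiv 0$, which you correctly flag as requiring a separate, boundary-condition-dependent argument.
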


\begin{proof}
Since $kR$ does not coincide with any zero of the Bessel functions $J_n, n=0,1,2,\cdots$, 
then $k^2$ is not a Dirichlet eigenvalue for the negative Laplacian for $B_z$:
\begin{equation}\label{dep}
 \left\{
  \begin{array}{ll}
  \Delta u+k^2 u=0,\ \ \ \textrm{in}\  B_z, \\
  u=0,\ \ \ \textrm{on} \ \partial B_z.
  \end{array}\right.
\end{equation}

Case 1: Let $D\subset B_z$. From \autoref{lemma1}, for any $\varepsilon$, we have $g_z^\varepsilon$ such that
\begin{equation}\label{equ2.1}
\|\mathcal{H}g_z^\varepsilon+U^s\|_{L^2(\partial B_z)}\leq \frac{\varepsilon}{\|A\|},
\end{equation}
where $A:H^{1/2}(\partial B_z)\rightarrow L^2(\mathds{S})$ is defined in \autoref{lemma2}. Then
\begin{equation}\label{equ2.2}
A(-\mathcal{H}g_z^\varepsilon)=\int_\mathds{S}u^{B_z}_{\infty}(\cdot,d)g_z^\varepsilon(d)d s(d).
\end{equation}
Since $D\subset B_z$, we also have
\begin{equation}\label{equ2.3}
A(U^s)=U_\infty.
\end{equation}
Taking the difference of (\ref{equ2.2}) and (\ref{equ2.3}), and from \autoref{lemma2} and (\ref{equ2.1}), for any $\varepsilon$, we have $g_z^\varepsilon$ such that
\begin{eqnarray}
&&\bigg\|\int_{\mathds{S}}u^{B_z}_{\infty}(\hat{x},d)g_z^\varepsilon(d)d s(d)-U_\infty(\hat{x})\bigg\|_{L_2(\mathds{S})}\nonumber\\
&=&\|A(-\mathcal{H}g_z^\varepsilon)-A(U^s)\|_{L^2(\mathds{S})}\nonumber\\
&\leq& \|A\|\cdot \|\mathcal{H}g_z^\varepsilon+U^s\|_{L^2(\mathds{S})}\nonumber\\
&\leq& \varepsilon.
\end{eqnarray}
By (\ref{equ2.1}), as $\varepsilon\rightarrow 0$, the Herglotz wave function 
$v_{g_z^\varepsilon}(x):=\int_{\partial B_z}e^{ikx\cdot d}g_z^\varepsilon (d)ds(d), x\in B_z$ 
converges to the unique solution of the following well-posed inner Dirichlet problem $w\in H^1(B_z)$
\begin{equation*}
 \left\{
  \begin{array}{ll}
  \Delta w+k^2 w=0,\ \ \ \textrm{in}\  B_z, \\
  w=-U^s,\ \ \ \textrm{on} \ \partial B_z.
  \end{array}\right.
\end{equation*}

Case 2: Let $D\cap B_z=\emptyset$. From  \autoref{lemma3}, for every $\varepsilon$, there exists $g_z^\varepsilon\in L^2(\mathds{S})$ such that
\begin{equation}\label{equ2.4}
\bigg\|\int_\mathds{S} u^{B_z}_{\infty}(\hat{x},d)g_z^\varepsilon(d)ds(d)-U_\infty(\hat{x})\bigg\|_{L_2(\mathds{S})}<\varepsilon.
\end{equation}
Assume to the contrary that there exists a sequence $v_{g_z^{\varepsilon_n}}$ 
such that $\|v_{g_z^{\varepsilon_n}}\|_{H^1(B_z)}$ remains bounded as $\varepsilon_n \to 0, n \to \infty$. 
Without loss of generality, we assume that $v_{g_z^{\varepsilon_n}}$ converges to $v_{g_z}\in H^1(B_z)$ weakly as $n\rightarrow \infty$, where $v_{g_z}(x)=\int_{\partial B_z}e^{ikx\cdot d}g_z (d)ds(d), x\in B_z$. Let $V^s\in H_{loc}^1(\mathds{R}^2\setminus \overline{B}_z)$ be the unique solution of the following exterior Dirichlet problem
\begin{equation*}
  \left\{
   \begin{array}{lll}
   &\Delta V^s+k^2V^s=0\ \ \ \textrm{in}\  \mathds{R}^2\setminus \overline{B}_z, \\
   &V^s=-v_{g_z} \ \ \ \textrm{on}\  \partial B_z,\\
   &\lim\limits_{r\rightarrow\infty}\sqrt{r}(\partial V^s/\partial r-{i}kV^s)=0.
   \end{array}\right.
\end{equation*}
Its far field pattern is $V_\infty=\int_\mathds{S} u^{B_z}_{\infty}(x,d)g_z(d)ds(d)$. While from (\ref{equ2.4}), as $\varepsilon\rightarrow 0$, we have $\int_\mathds{S} u^{B_z}_{\infty}(x,d)g_z(d)ds(d)=U_\infty$. Consequently,
\begin{equation}
V_\infty=U_\infty.
\end{equation}
Then by Rellich's lemma (see Lemma 2.12 in \cite{ColtonKress2013}), the scattered fields coincide in $\mathds{R}^2\setminus (\overline{B}_{z}\cup \overline{D})$ and we can identify $\tilde{U}^s:=V^s=U^s$ in $\mathds{R}^2\setminus (\overline{B}_{z}\cup \overline{D})$. We have that $V^s$ has an extension into $\mathds{R}^2\setminus B_z$ and $U^s$ has an extension into $\mathds{R}^2\setminus D$. Since $D\cap B_z=\emptyset$, $\tilde{U}^s$ can be extended from $\mathds{R}^2\setminus (\overline{B}_{z}\cup \overline{D})$ into all of $\mathds{R}^2$, that is, $\tilde{U}^s$ is an entire solution to the Helmholtz equation. Since $\tilde{U}^s$ also satisfies the radiation condition, it must vanish identically in all of $\mathds{R}^2$. This leads to a contradiction since $\tilde{U}^s$ is not a null function.

Case 3: If the scattered field of $D$ can be extended from $\mathds{R}^2\setminus D$ into $\mathds{R}^2\setminus (D\cap B_z)$, then $\tilde{D}:=D\cap B_z$ has the far field pattern $U_\infty$ and $\tilde{D}\in B_z$. One can simply replace $D$ with $\tilde{D}$ in Case 1 to arrive the same conclusion. Otherwise, if the scattered solution of $D$ can not be extended from $\mathds{R}^2\setminus D$ into $\mathds{R}^2\setminus (D\cap B_z)$, a similar proof to Case 2 works.
\end{proof}
\begin{remark}
Note that the radius $R$ of the disc $B_z$ can be chosen such that $kR$ is not a Dirichlet eigenvalue for the negative Laplacian for $B_z$. Hence the 
ESM is independent of wavenumbers, i.e., the ESM works for all wavenumbers. In contrast, the LSM needs to exclude wavenumbers which are certain eigenvalues related to the scatterer.
\end{remark}

\subsection{The ESM Algorithm}
Now we are ready to present the extended sampling method (ESM) to reconstruct the location and approximate support of a scatterer $D$ using the far field pattern due to a single incident wave.

%

Recall that $u^{B_z}_{\infty}(\cdot,d)$ is the far field pattern of a sound soft disc $B_z$ centered at $z$ with radius $R$ and $U_\infty(\hat{x})$ is the measured far field pattern of an unknown scatterer $D$ due to an incident plane wave. 
Let $\Omega$ be a domain containing $D$. For a sampling point $z \in \Omega$, we consider the linear ill-posed integral equation 
\begin{equation}\label{ffe}
\int_{\mathds{S}}u^{B_z}_{\infty}(\hat{x},d)g_z(d)d s(d)=U_\infty(\hat{x}),\ \ \ \ \hat{x}\in \mathds{S}.
\end{equation}
Suppose that \eqref{ffe} is solved by some regularization scheme, say Tikhonov regularization with parameter $\alpha$.
By \autoref{theorem1}, one expects that the approximate solution $\|g^\alpha_z\|_{L^2(\mathds{S})}$ is relatively large when $D$ is not inside $B_z$
and relatively small when $D$ is inside $B_z$.
Consequently, an approximation of the location and support of the scatterer $D$ can be obtained by plotting $\|g^\alpha_z\|_{L^2(\mathds{S})}$ for
all sampling points $z \in \Omega$.


\begin{figure}[h!]
\centering
\includegraphics[width=0.7\textwidth]{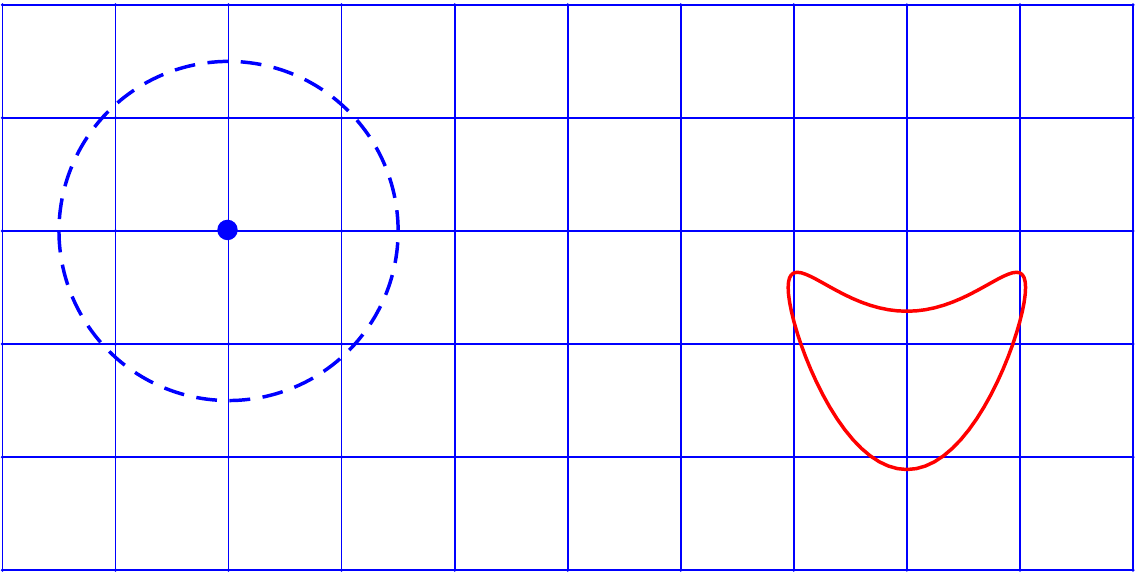}
\caption{\label{fig1}Illustration of the extended sampling method.}
\end{figure}

The algorithm of the ESM is as follows (see \autoref{fig1}).
\begin{itemize}
\item[ ] \hspace{-1cm}{\bf{The Extended Sampling Method}}
\item[1.] Generate a set $T$ of sampling points for $\Omega$ which contains $D$.
\item[2.] Compute $u^{B_z}_{\infty}(\hat{x},d)$ for all $\hat{x}\in \mathds{S}$ and $d \in \mathds{S}$
\item[3.] For each sampling point $z \in T$, 
	\begin{itemize}
	\item[a.] set up a discrete version of \eqref{ffe};
	\item[b.] use the Tikhonov regularization to compute an approximate $g^\alpha_z$ to \eqref{ffe} ($\alpha$ is chosen according to the Morozov discrepancy principle);
	\end{itemize}
\item[4.] Find the global minimum point $z^*\in T$ for $\|g^\alpha_{z}\|_{L^2}$.
\item[5.] Choose $B_{z^*}$ to be the approximate support for $D$.
\end{itemize}

\begin{remark}
Let $\tilde{D}$ be the smallest subset of $D$ such that the scattered field for $D$ can be extended from $\mathds{R}^2\setminus D$ into $\mathds{R}^2\setminus \tilde{D}$. If the radius $R$ of the sampling disc is greater than the radius of the circumscribe circle of $\tilde{D}$, then $B_{z^*}$ found by the ESM method should contain $\tilde{D}$.
\end{remark}


\subsection{Multilevel ESM}
An important step for the ESM is to choose the radius $R$ of $B_z$.
It would be ideal to choose $B_z$ to be slightly larger than the scatterer $D$.
However, this is not possible if no a priori information about the scatterer is available.
To resolve the difficulty, we propose a multilevel technique to choose a suitable radius $R$.
\begin{itemize}
\item[ ]\hspace{-1cm}{\bf{Multilevel ESM}}
\item[1.] Choose the sampling discs with a large radius $R$. Generate $T$ such that the distance between sampling points is roughly $R$. 
Using the ESM, determine the global minimum point $z_0\in T$ for  $\|g^\alpha_{z}\|_{L^2}$ and an approximation $D_0$ for $D$.

\item[2.] For $j=1,2,\cdots$
	\begin{itemize}
	\item Let $R_j=R/2^j$ and generate $T_j$ with the distance between sampling points being roughly $R_j$. 
	\item Find the minimum point $z_j\in T$.  If $z_{j} \not\in D_{j-1}$, go to Step 3.
	\end{itemize}

\item[3.] Choose $z_{j-1}$ and $D_{j-1}$ to be the location and approximate support of $D$, respectively.
\end{itemize}

The second step of the Multilevel ESM  is a loop. 
The following is some heuristic argument on the termination of the loop.
If the size of the sampling disc $R_j$ is small enough, the scattered field cannot be continuously extended to the complement of
any of the sampling discs according to \autoref{theorem1}. Then the minimum point $z_j$ should lie outside $D$
and the smallest proper size of the sampling discs is found. We shall see that numerical examples support this argument.

\subsection{Relation to Other Methods}
The far field equation \eqref{fe} of the ESM looks similar to the linear sampling method (LSM) proposed by Colton and Kirsch \cite{ColtonKirsch1996IP}.
The far field equation for the LSM is 
\begin{equation}\label{FELSM}
\int_{\mathds{S}}U_\infty(\hat{x}; d)g_z(d)ds(d)=\Phi_\infty(\hat{x}; z), \ \ \ \ \hat{x}\in \mathds{S}.
\end{equation}
In \eqref{FELSM}, $U_\infty(\hat{x};d), \hat{x}, d\in \mathds{S}$ is the full-aperture far field pattern of the scatterer $D$ due to the incident plane wave $u^i=\exp(ikx\cdot d)$, and $\Phi_\infty(\hat{x}, z)$ is the far field pattern of the fundamental solution of the Helmholtz equation $\Phi(x,z):=\frac{i}{4}H^{(1)}_0(k|x-z|)$,
where $H^{(1)}_0$ is the Hankel function of the first kind of order zero. For each sampling point $z$ in the sampling region $\Omega$ which contains $D$,
using some appropriate regularization scheme, one obtains an approximate solution $g^\alpha_z$.
In general, the norm $\|g^\alpha_z\|$ is larger for $z\not \in D$ and smaller for $z\in D$ such that the support of $D$ can be reconstructed accordingly.

Since $U_\infty(\hat{x}; d)$ is used as the kernel of the integral operator in \eqref{FELSM}, full aperture far field pattern is necessary. Namely, one needs
 $U_\infty(\hat{x}; d)$ for all $\hat{x}\in \mathds{S}$ and $d \in \mathds{S}$ to set up \eqref{FELSM}.
 As a consequence, the LSM cannot directly process limited aperture far field pattern, e.g., $U_\infty(\hat{x}; d_0)$ for a single incident direction $d_0$.
 This also applies to other classical sampling methods such as the Factorization Method and the Reciprocity Gap Method.

 In contrast, the kernel of the integral operator $\mathcal{F}_z$ for the ESM is the full aperture far field pattern $u_{\infty, z}(\hat{x};d)$ of a sound soft disc $B_z$. The measured far field pattern $U_{\infty}(\hat{x})$ is the right hand side of the far field equation \eqref{fe}. Hence,
 in principle, the ESM can take limited aperture data of any type as input.

Another related method is the range test due to Potthast, Sylvester and Kusiak \cite{PotthastSylvestrKusiak2003IP}. 
Similar to the ESM, it processes $U_\infty(\hat{x})$ for a single incident wave.
Let $G$ be a convex test domain. The integral equation in \cite{PotthastSylvestrKusiak2003IP} is given by
\begin{equation}\label{RTIE}
\int_{\partial G} e^{i k \hat{x} \cdot{d}} g(d) ds(d) = U_\infty(\hat{x}), \quad \hat{x} \in \mathds{S}.
\end{equation}
It has a solution in $L^2(\partial G)$ if and only if the scattered field can be analytically extended up to the boundary $\partial G$.
Then a convex support can be obtained numerically and the intersection of these supports provides information to reconstruct $D$.

The ESM uses a slightly different integral equation. More importantly, other than finding a convex domain, the ESM computes an indicator for a sampling point,
which makes it possible to process data of multiple directions or even of multiple frequencies easily.

\section{Numerical Examples}\label{NEs}
We now present some numerical examples to show the performance of the ESM.
In particular, we consider the inverse scattering problems for inhomogeneous media and impenetrable obstacles with Dirichlet, Neumann and impedance boundary conditions. 
The synthetic far-field data is generated using boundary integral equations \cite{ColtonKress2013} for impenetrable obstacles.
For inhomogeneous media, a finite element method with perfectly matched layer (PML) \cite{ChenLiu2005SIAM} is used to
compute the scattered data. Then the data are used to obtain the far-field pattern \cite{Monk1995COMPEL}.

We consider two obstacles: a triangle whose boundary is given by
\begin{equation*}
(1+0.15\cos 3t)\big(\cos t, \sin t\big)+\big(3,5\big),\ \ \ t\in [0,2\pi)
\end{equation*}
and a kite whose boundary is given by
\begin{equation*}
\big(1.5\sin t, \cos t+0.65\cos 2t-0.65\big)+\big(3,5\big),\ \ \ t\in [0,2\pi).
\end{equation*}
In the case of the impedance boundary condition, we set $\lambda=2$. 

For the inhomogeneous medium, $D$ is the L-shaped domain given by
\begin{equation*}
\big[-0.9,1.1\big]\times \big[-1.1,0.9\big]\setminus \big[0.1,1.1\big]\times \big[-1.1,-0.1\big]
\end{equation*}
with the refractive index
 \begin{displaymath}
  n(x,y)= \left\{
     \begin{array}{lr}
       \frac{1}{2}\big(3+\cos(2\pi\sqrt{x^2+y^2})\big), & (x, y)^T\in D,\\
       1, & \textrm{otherwise}.
     \end{array}
   \right.
\end{displaymath}

Using (\ref{shift}), we can rewrite the far-field equation (\ref{ffe}) as 
\[
\int_{\mathds{S}}e^{i kz\cdot (d-\hat{x})}u^B_{\infty}(\hat{x},d)g_z(d)d s(d) = U_\infty(\hat{x}),\ \ \ \ \hat{x}\in \mathds{S},
\]
i.e.,
\begin{equation}\label{ffe2}
\int_{\mathds{S}}e^{i kz\cdot d}u^B_{\infty}(\hat{x},d)g_z(d)d s(d) = e^{i kz\cdot \hat{x}}U_\infty(\hat{x}),\ \ \ \ \hat{x}\in \mathds{S},
\end{equation}
where $u^B_{\infty}(\hat{x};d)$ is given in \eqref{uinf}. 

The incident plane wave is $u^i(x)=e^{i kx\cdot d}$ with $d=(1,0)^T$ and $k=1$. For all the examples, 
the synthetic data is a $52\times 1$ vector $F^z$ such that 
$F^z_{j} \approx e^{i kz\cdot \hat{x}_j} U_\infty(\hat{x}_j, d)$, where  $U_\infty(\hat{x}_j, d)$ is far field pattern of 
$52$ observation directions $\hat{x}_j, j=1, 2, \cdots, 52,$ uniformly distributed on the unit circle. Then $3\%$ of uniform random noise is added to $U_\infty(\hat{x}_j, d)$.

\subsection{Examples for the ESM}
Let $\Omega=[-10,10]\times [-10, 10]$ and choose the samplings points to be 
\[
T:=\{(-10+0.1m, -10+0.1n), \quad m,n=0, 1, \cdots, 200\}.
\] 
The radius of the sampling discs is set to $R=1$. 

For each mesh point $z \in T$, we use the Tikhonov regularization with a fixed parameter $\alpha=10^{-5}$. 
We discretize (\ref{ffe2}) to obtain a linear system $A^zg_z=F^z$, where $A^z$ is the matrix given by
\[
A^z_{l,j}=e^{i kz\cdot d_j}u^B_{\infty}(d_l,d_j), \quad l, j=1,2,\cdots,52.
\] 
Then the regularized solution is given by
\begin{equation*}
g^\alpha_z\approx \big((A^z)^*A^z+\alpha A^z\big)^{-1}(A^z)^*F^z.
\end{equation*}
We plot the contours for the indicator function
\begin{equation}\label{Iz}
 I_z = \frac{\|g^\alpha_z\|_{l^2}}{\max\limits_{z\in T}\|g^\alpha_z\|_{l^2}}
\end{equation}
for all the sampling points $z\in T$.

\autoref{fig3}(a)  is the triangular scatterer. \autoref{fig3}(b), (c), (d) are the contour plots of $I_z$ for obstacles with
Dirichlet,  Neumann, and impedance boundary conditions, respectively.
The asterisks are the minimum points of $I_z$. 
\begin{figure}[h!]
\begin{minipage}{0.40\linewidth}
\includegraphics[angle=0, width=0.86\textwidth]{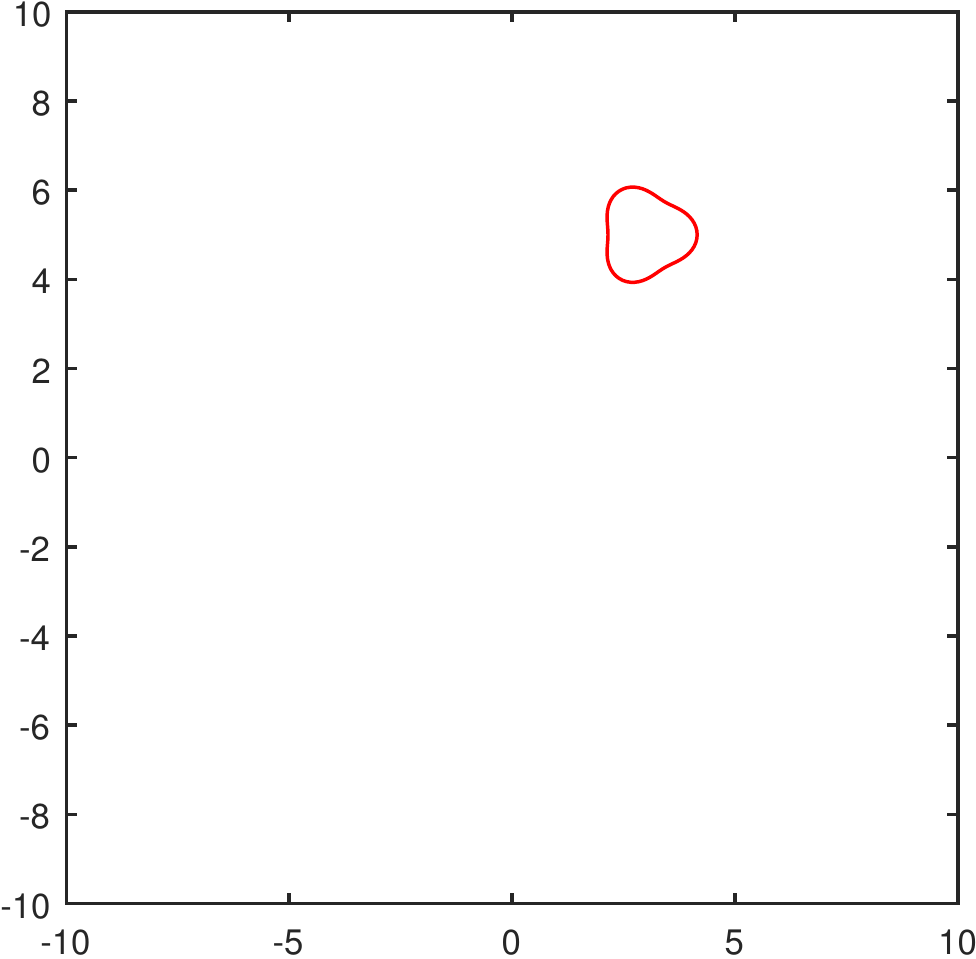}
\vspace{-0.5cm}
\begin{center}
(a)
\end{center}
\end{minipage}
\begin{minipage}{0.40\linewidth}
\includegraphics[angle=0, width=\textwidth]{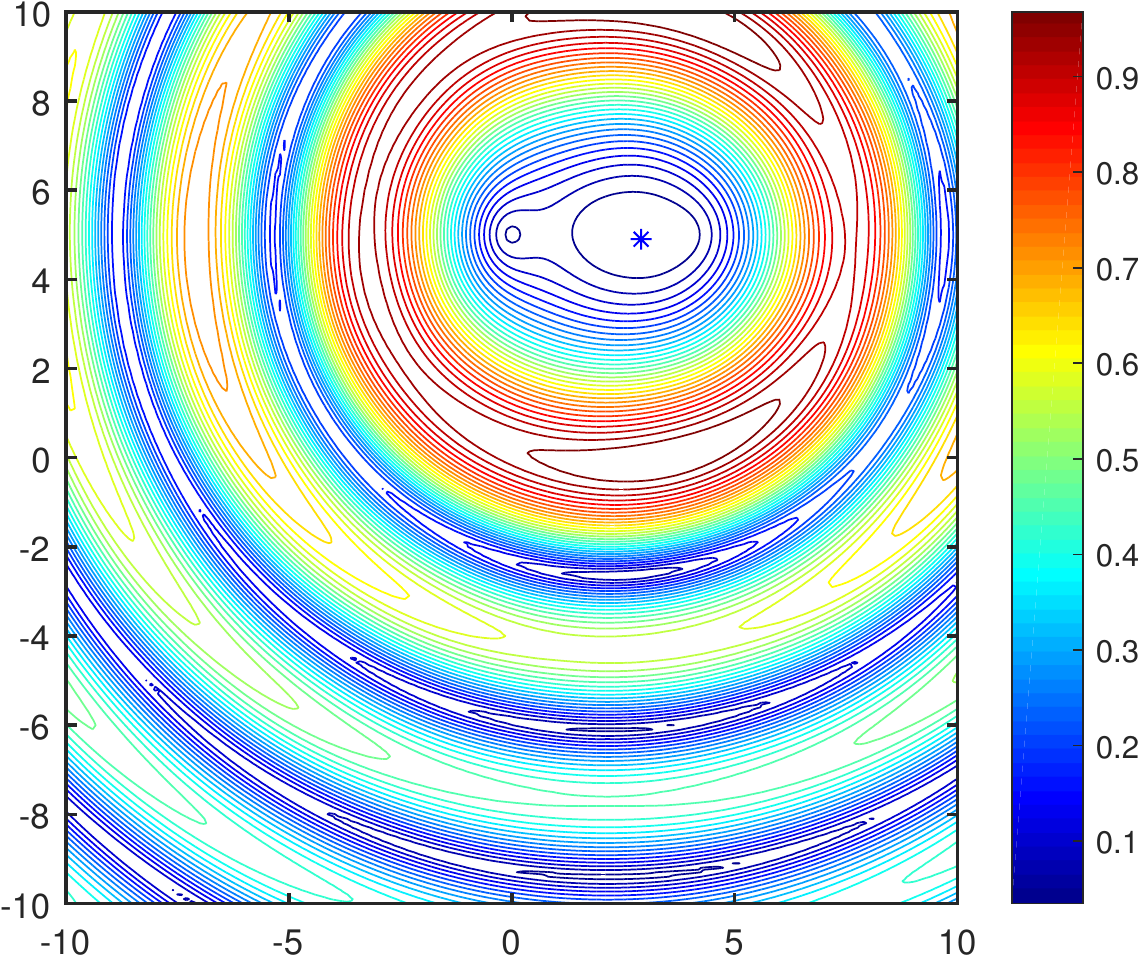}
\vspace{-1cm}
\begin{center}
(b)
\end{center}
\end{minipage}

\begin{minipage}{0.40\linewidth}
\includegraphics[angle=0, width=\textwidth]{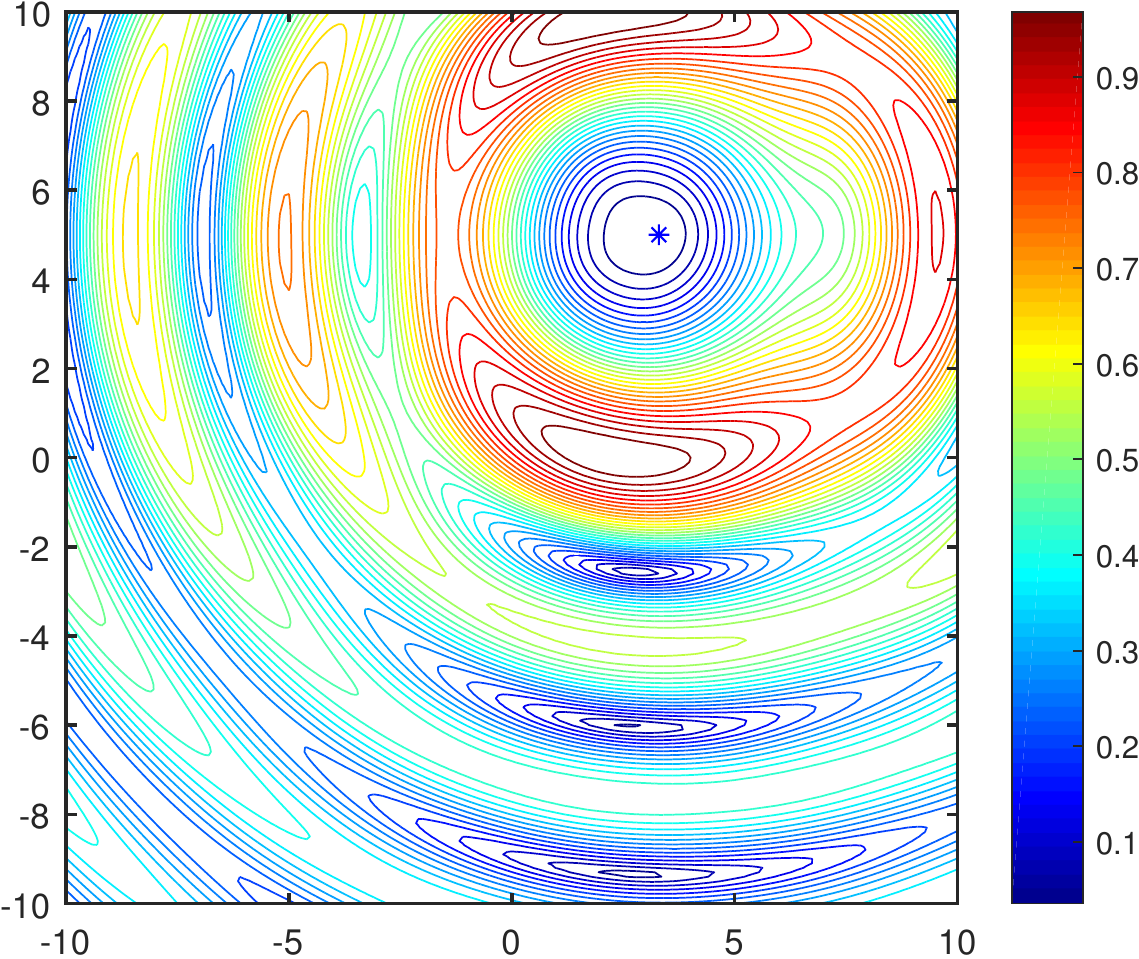}
\vspace{-1cm}
\begin{center}
(c)
\end{center}
\end{minipage}
\begin{minipage}{0.40\linewidth}
\includegraphics[angle=0, width=\textwidth]{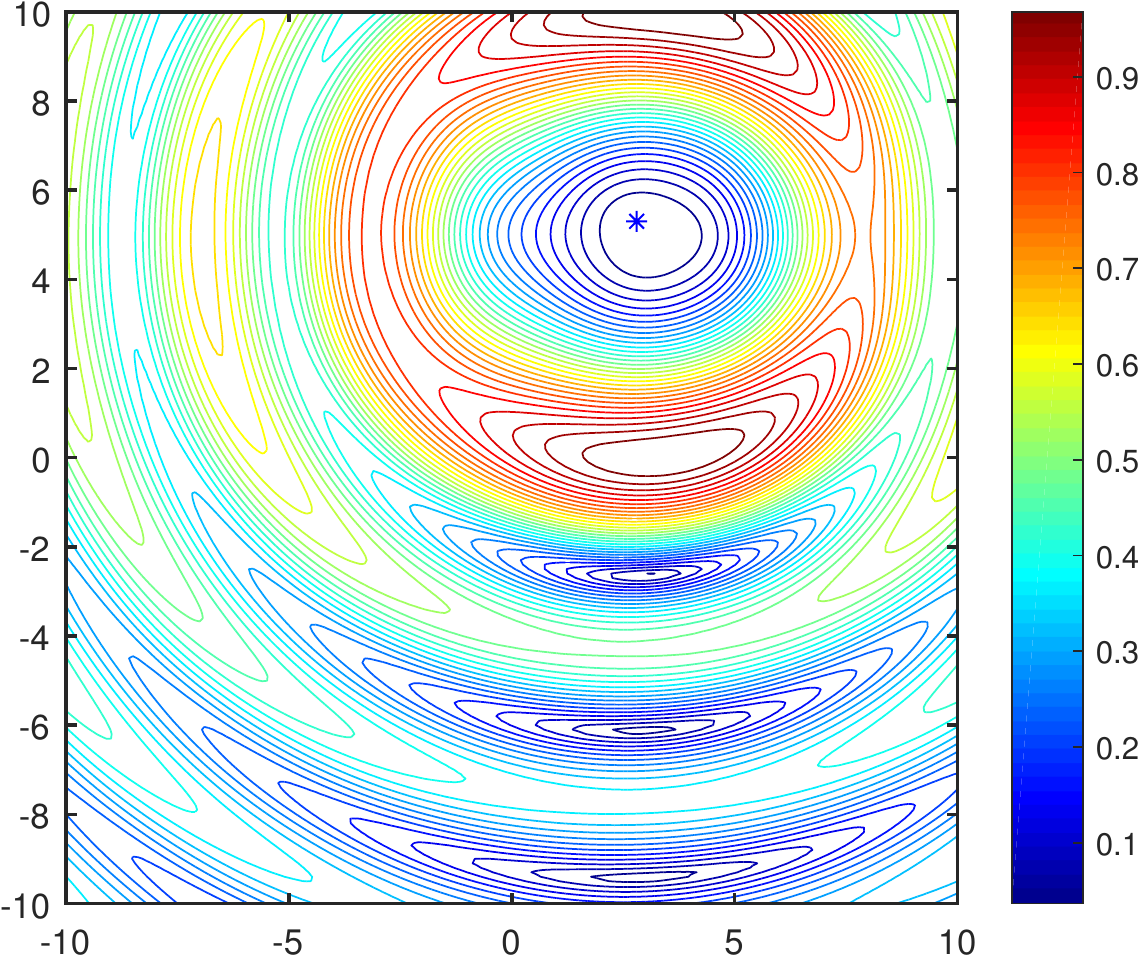}
\vspace{-1cm}
\begin{center}
(d)
\end{center}
\end{minipage}
\caption{\label{fig3} The triangle (a) and contour plots of $I_z$ using sampling discs of radius $R=1$: (b) Dirichlet BC, (c) Neumann BC, (d) Impedance BC.}
\end{figure}

We plot discs whose centers are $z$'s minimizing $I_z$ in \autoref{fig2} for different boundary conditions.
These minimization points provide the correct locations of the obstacles. 
\begin{figure}[h!]
\begin{minipage}{0.30\linewidth}
\includegraphics[angle=0, width=\textwidth]{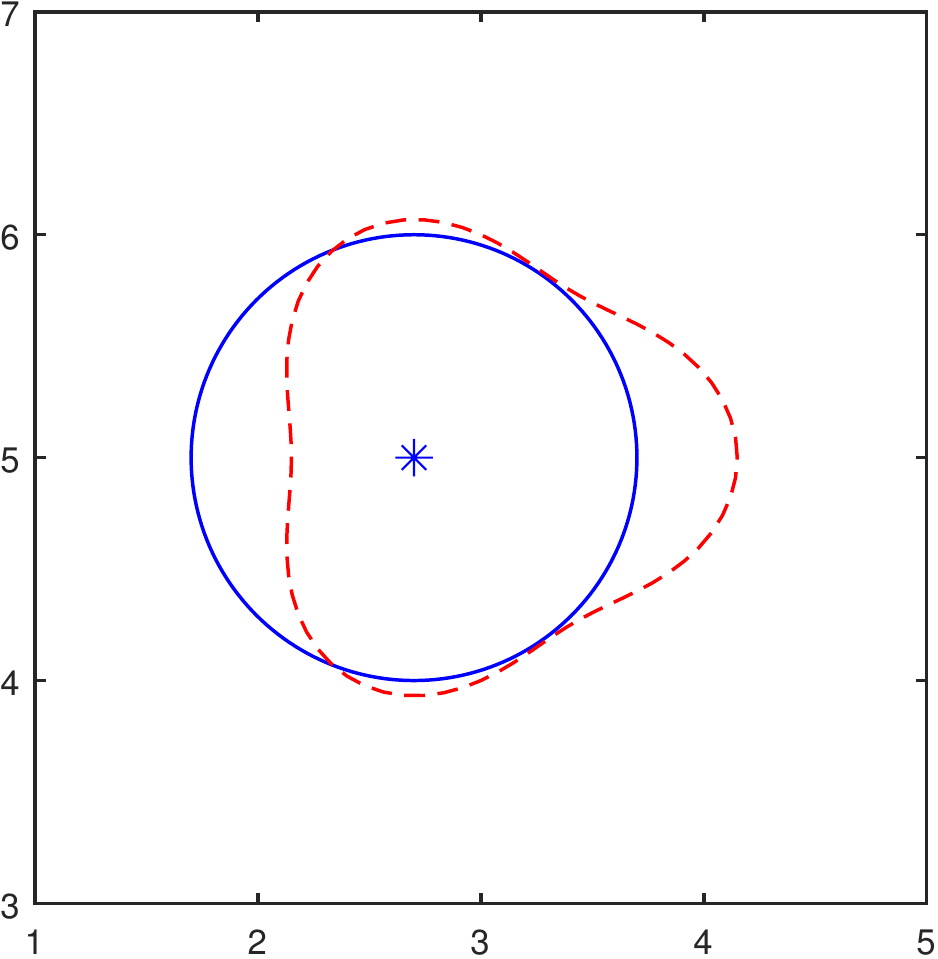}
\end{minipage}
\hspace{0.2cm}
\begin{minipage}{0.30\linewidth}
\includegraphics[angle=0, width=\textwidth]{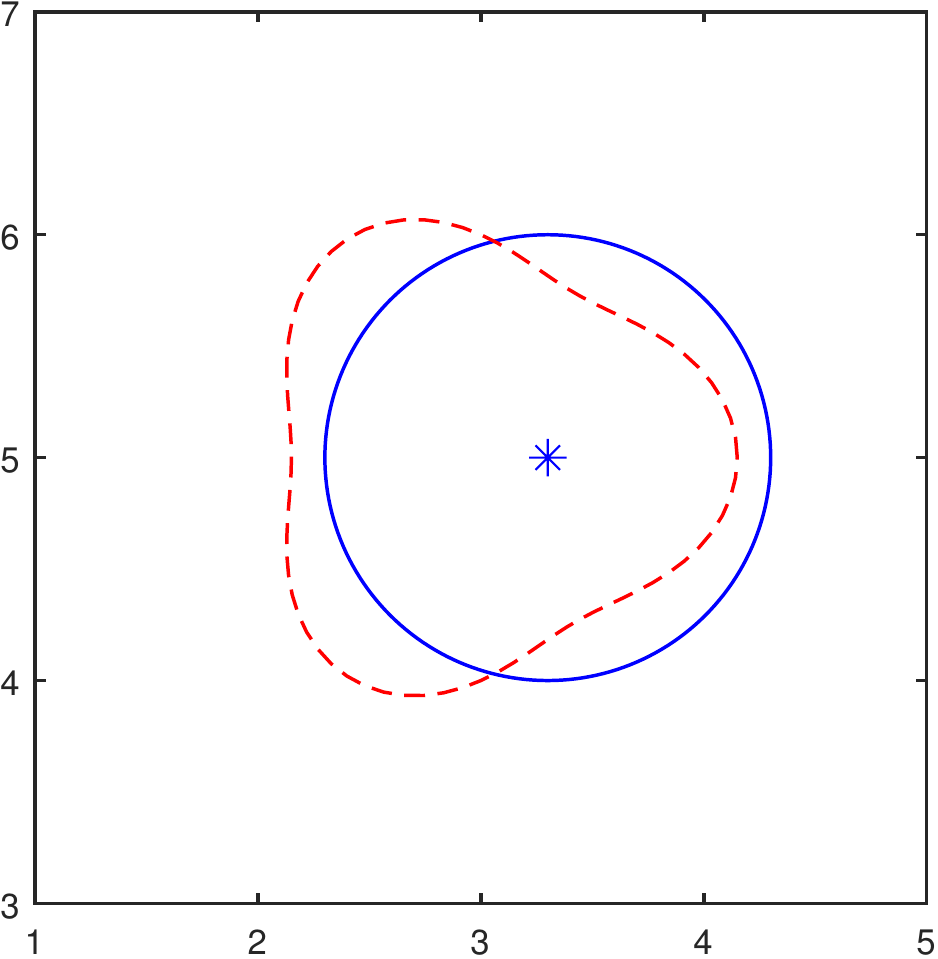}
\end{minipage}
\hspace{0.2cm}
\begin{minipage}{0.30\linewidth}
\includegraphics[angle=0, width=\textwidth]{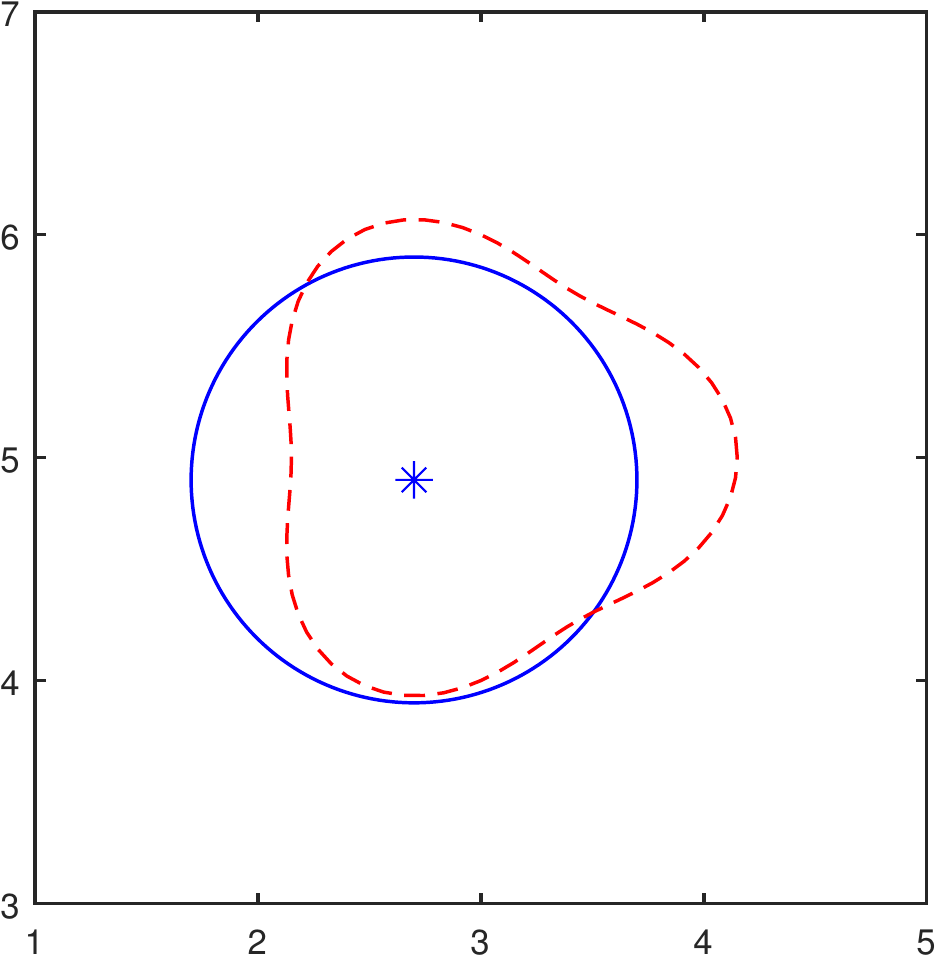}
\end{minipage}
\caption{\label{fig2} Approximate reconstructions for the triangle using sampling discs $R=1$. (a) Dirichlet boundary condition. (b) Neumann boundary condition. (c) Impedance boundary condition.}
\end{figure}


\autoref{fig4} show the similar results of the EMS for the kite with Dirichlet, Neumann and the impedance boundary conditions. 
\autoref{fig6} shows the reconstructions of the kite of different boundary conditions.  \autoref{fig7} show the results for the L-shape medium.
\begin{figure}[h!]
\begin{minipage}[t]{0.40\linewidth}
\includegraphics[angle=0, width=0.86\textwidth]{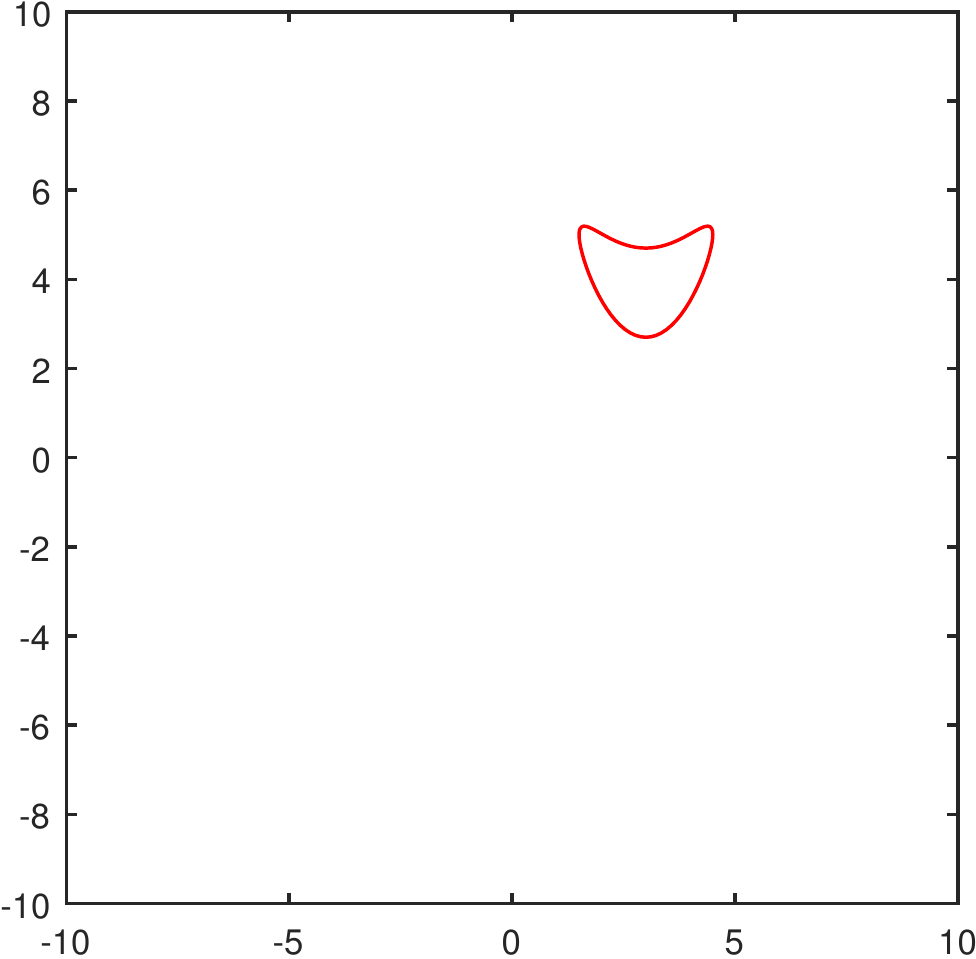}
\vspace{-0.5cm}
\begin{center}
(a)
\end{center}
\end{minipage}
\begin{minipage}[t]{0.40\linewidth}
\includegraphics[angle=0, width=\textwidth]{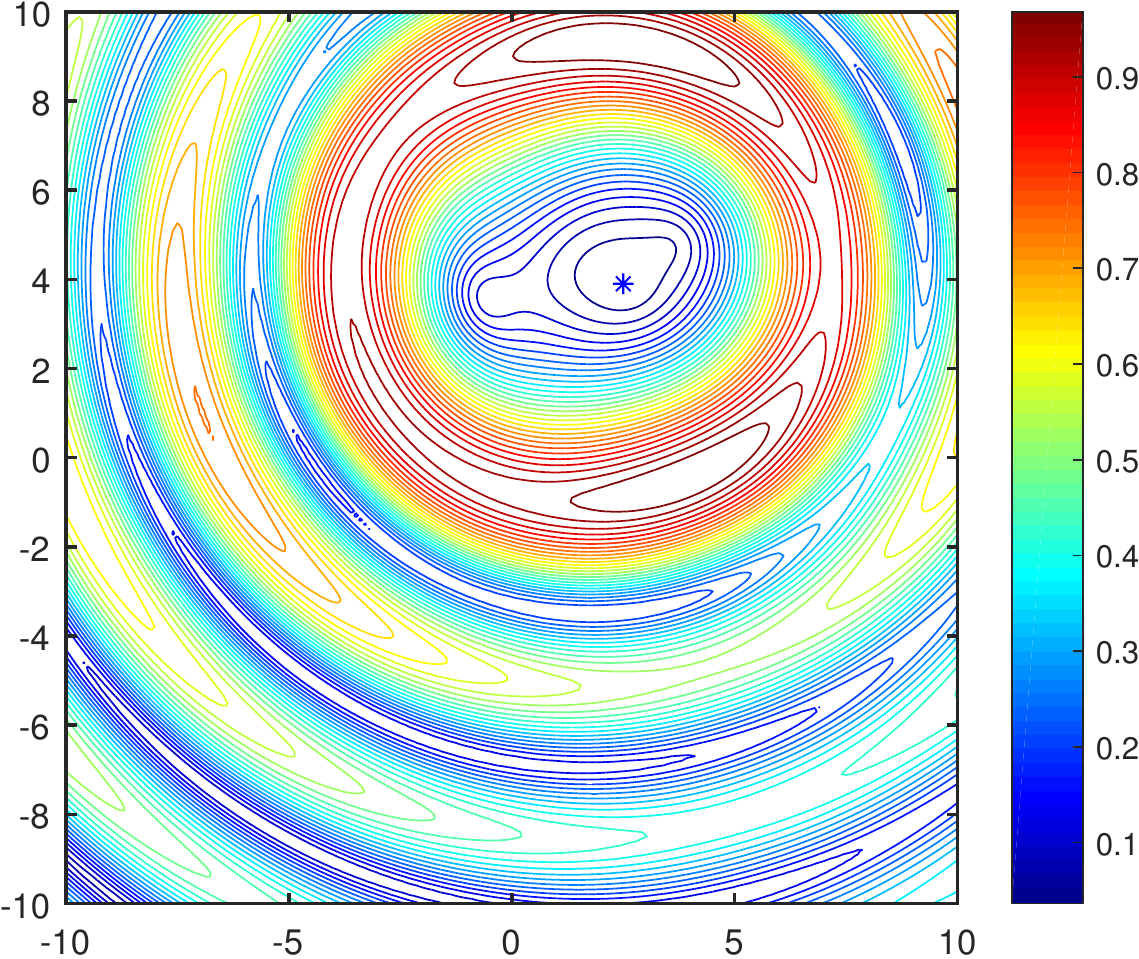}
\vspace{-1cm}
\begin{center}
(b)
\end{center}
\end{minipage}

\begin{minipage}[t]{0.40\linewidth}
\includegraphics[angle=0, width=\textwidth]{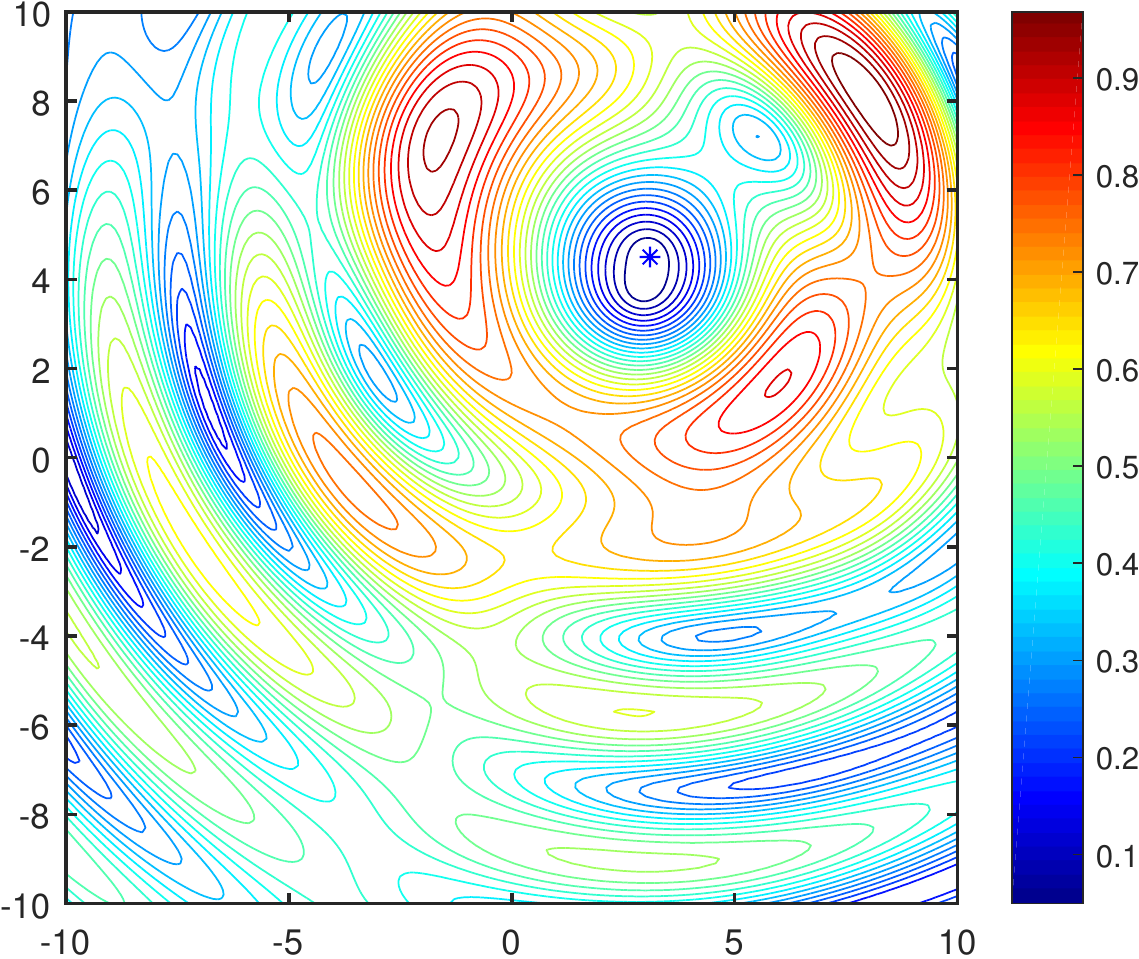}
\vspace{-1cm}
\begin{center}
(c)
\end{center}
\end{minipage}
\begin{minipage}[t]{0.40\linewidth}
\includegraphics[angle=0, width=\textwidth]{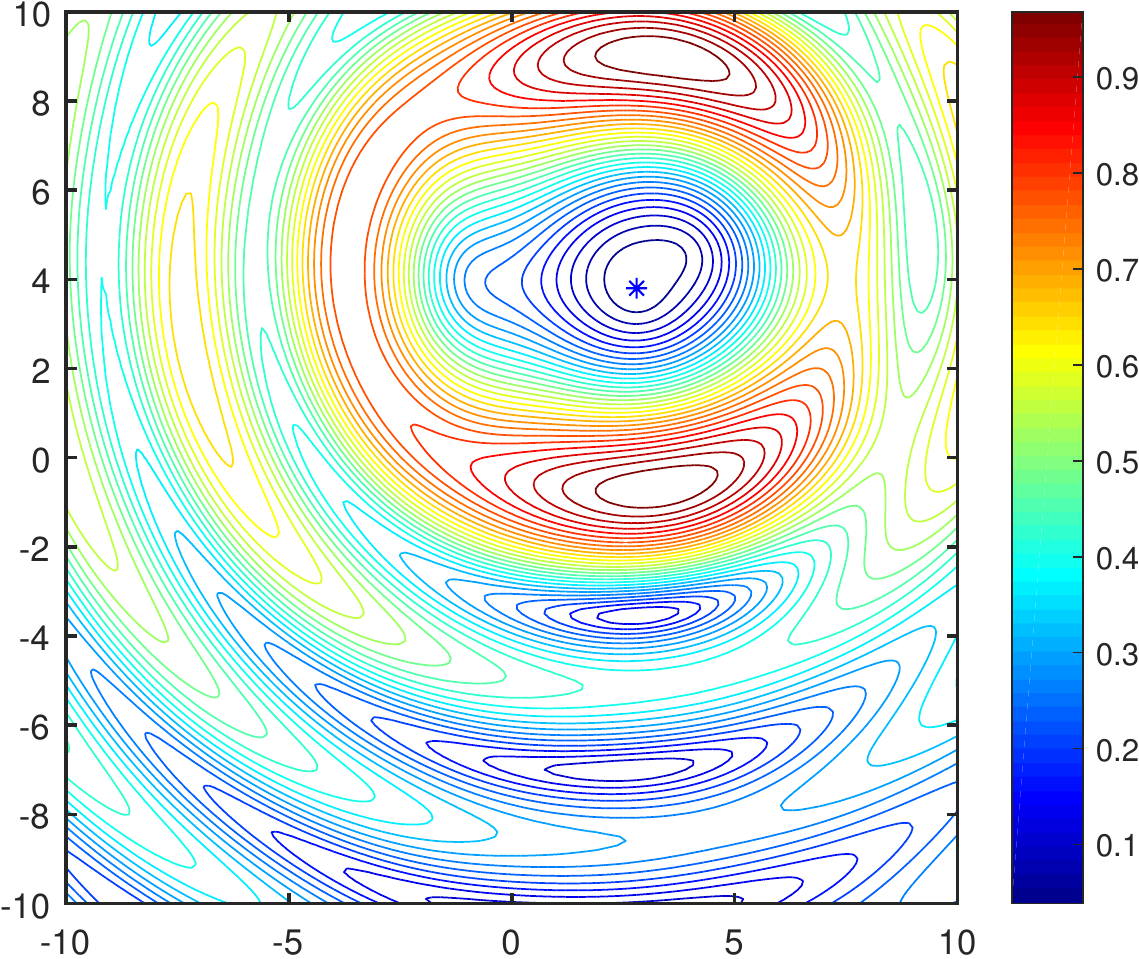}
\vspace{-1cm}
\begin{center}
(d)
\end{center}
\end{minipage}
\caption{\label{fig4}Approximate reconstructions for the kite using sampling discs $R=1$. (a) Dirichlet BC. (b) Neumann BC. (c) Impedance BC.}
\end{figure}


\begin{figure}[h!]
\begin{minipage}[t]{0.30\linewidth}
\includegraphics[angle=0, width=\textwidth]{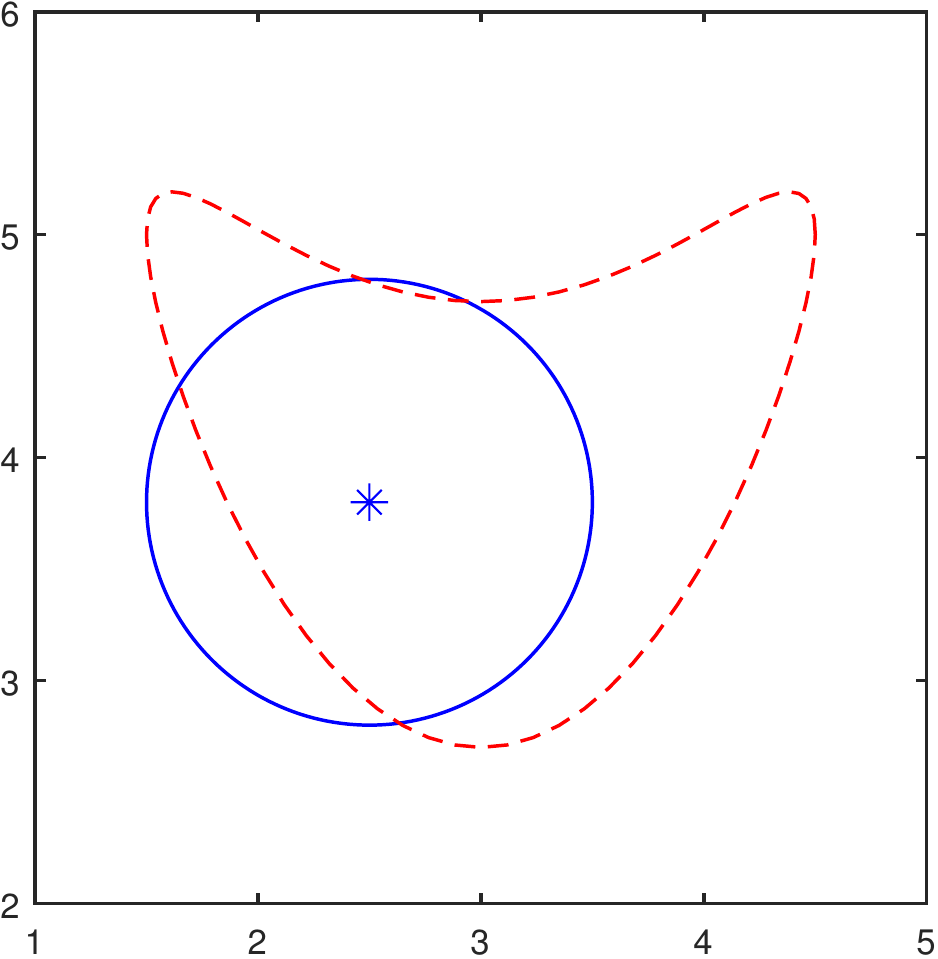}
\vspace{-0.9cm}
\begin{center}
(a)
\end{center}
\end{minipage}
\hspace{0.2cm}
\begin{minipage}[t]{0.30\linewidth}
\includegraphics[angle=0, width=\textwidth]{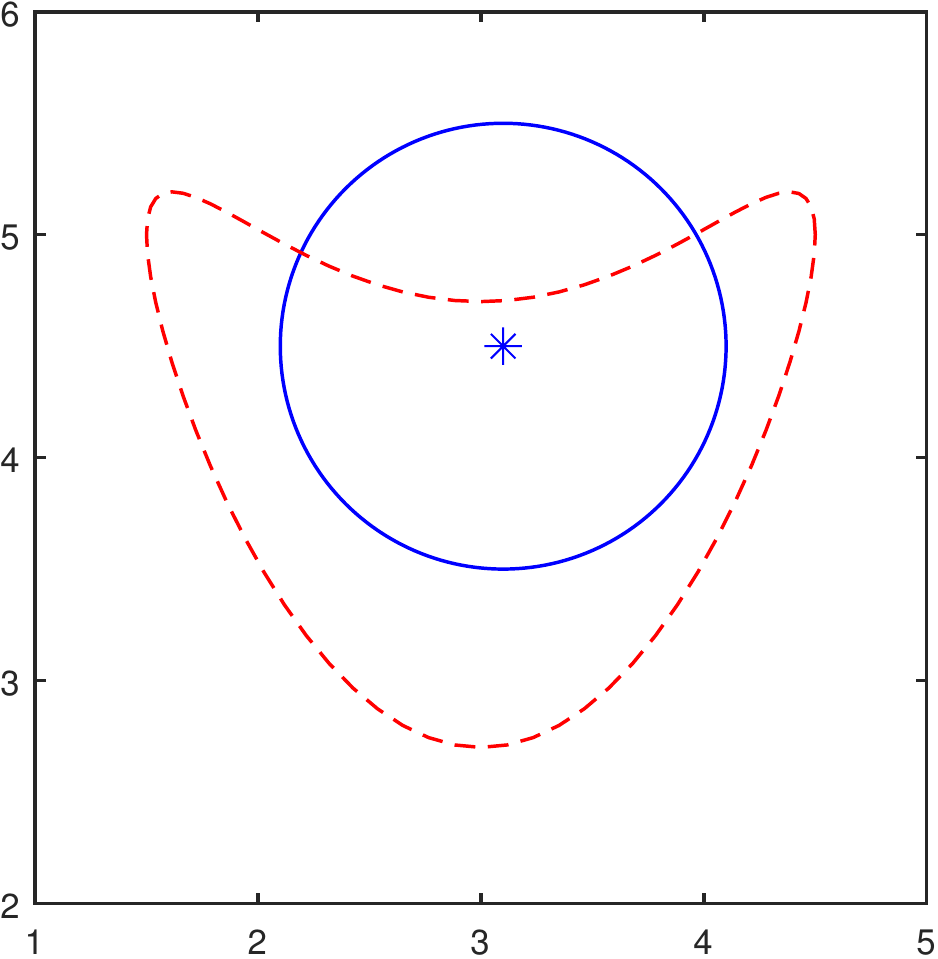}
\vspace{-0.9cm}
\begin{center}
(b)
\end{center}
\end{minipage}
\hspace{0.2cm}
\begin{minipage}[t]{0.30\linewidth}
\includegraphics[angle=0, width=\textwidth]{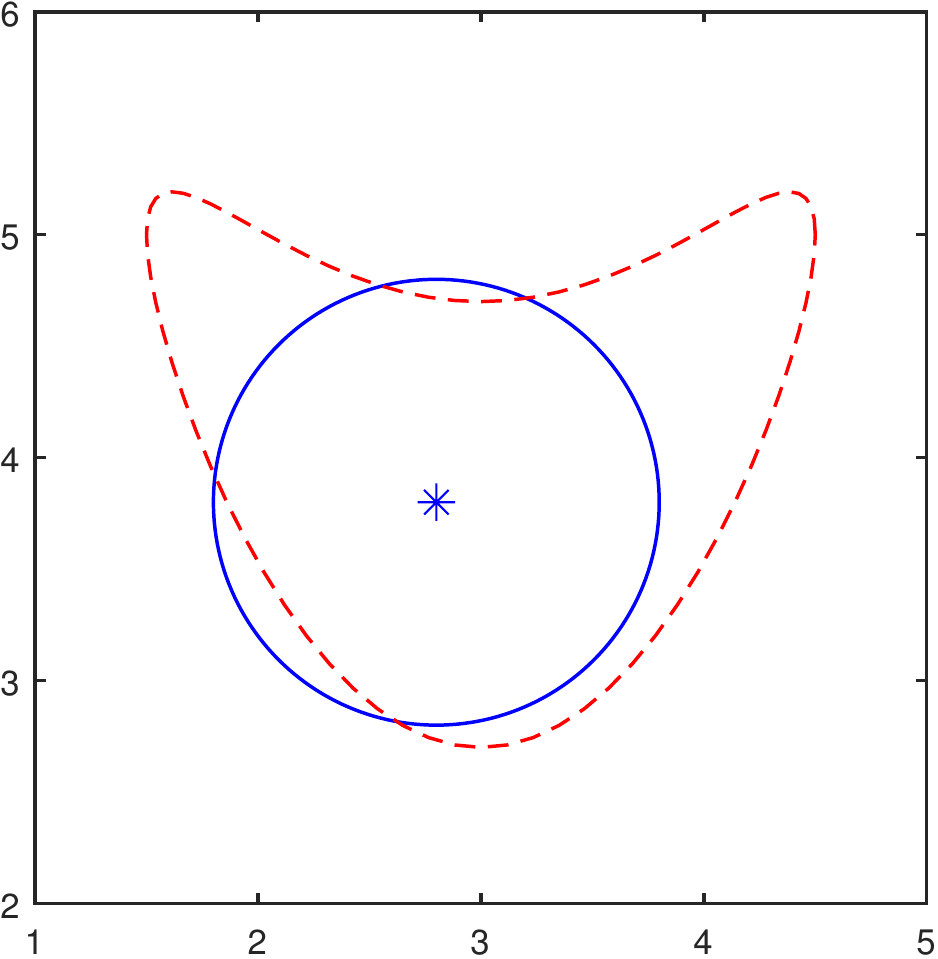}
\vspace{-0.9cm}
\begin{center}
(c)
\end{center}
\end{minipage}
\caption{\label{fig6}Reconstructions of the kite by the ESM using the sampling discs of $R=1$. (a) Dirichlet BC. 
(b) Neumann BC. (c) Impedance BC.}
\end{figure}

\begin{figure}[h!]
\begin{minipage}[t]{\linewidth}
\begin{minipage}[t]{0.40\linewidth}
\includegraphics[angle=0, width=\textwidth]{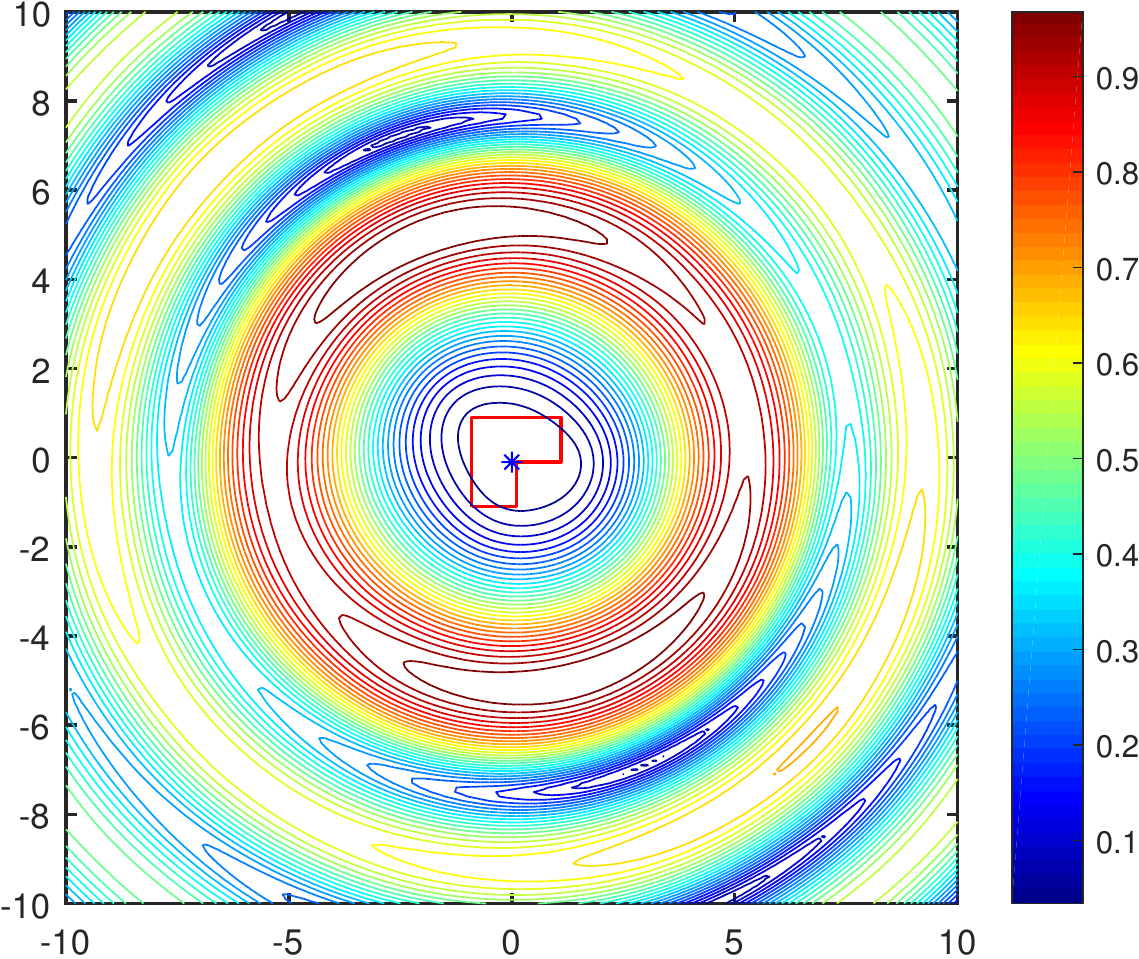}
\vspace{-0.9cm}
\begin{center}
(a)
\end{center}
\end{minipage}
\hspace{0.4cm}
\begin{minipage}[t]{0.40\linewidth}
\includegraphics[angle=0, width=0.83\textwidth]{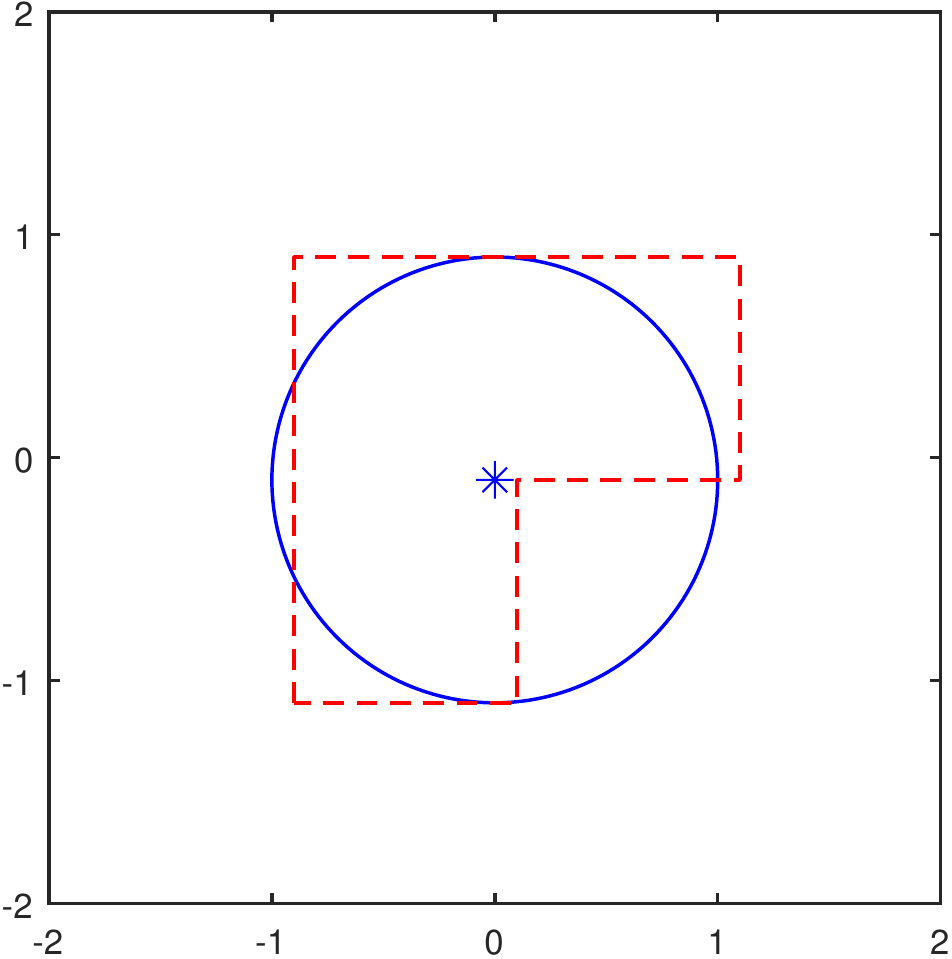}
\vspace{-0.4cm}
\begin{center}
(b)
\end{center}
\end{minipage}
\end{minipage}
\caption{\label{fig7} Reconstruction for L-shaped medium (a) Contours of $I_z$. (b) Reconstruction: blue solid lines are the reconstructions, red dashed lines are the exact obstacle, asterisks are the minimum points of $I_z$.}
\end{figure}

\subsection{Examples for the Multilevel ESM}
Since the size of the scatterer is not known, one needs to decide the proper radius $R$ of the sampling discs. We use the multilevel ESM with a large radius of the sampling discs being $R=2.4$. Then we decrease the radius until a suitable radius is found. 

In the following figures, the solid lines are the reconstructions, red dashed lines are the exact scatterer, asterisks are the minimum points of $I_z$.
\autoref{fig8} shows the results of the multilevel ESM for the triangle object with Dirichlet boundary condition, with the radius of the sampling discs reducing in half from 2.4 to 0.3. When the radius of the sampling discs is $R=0.3$, the minimum point is not inside the reconstruction for $R=0.6$. Hence the proper radius of the sampling discs should be $R=0.6$. Consequently, we obtain the estimation of the size of the target. 

Using sampling discs with radius $0.6$, \autoref{fig9}(a) shows the contours of $I_z$ on the mesh $(-10+0.1m, -10+0.1n), 0\leq m, n\leq 201$, and \autoref{fig9}(b) shows the reconstruction result of the multilevel ESM for the triangle object with Dirichlet boundary condition. .

Using the multilevel ESM, the proper radius $R$ is $0.6$ for the triangle and kite with different boundary conditions, and the proper radius $R$ is computed to be $0.3$ for the L-shaped medium scatterer. \autoref{fig10} shows the reconstructions of the multilevel ESM for the triangle with Neumann and impedance boundary condition ($\lambda=2$), the kite object with Dirichlet, Neumann and impedance boundary condition ($\lambda=2$), and the L-shape medium scatterer.  

\begin{figure}[h!]
\begin{minipage}[t]{\linewidth}
\begin{minipage}[t]{0.40\linewidth}
\includegraphics[angle=0, width=\textwidth]{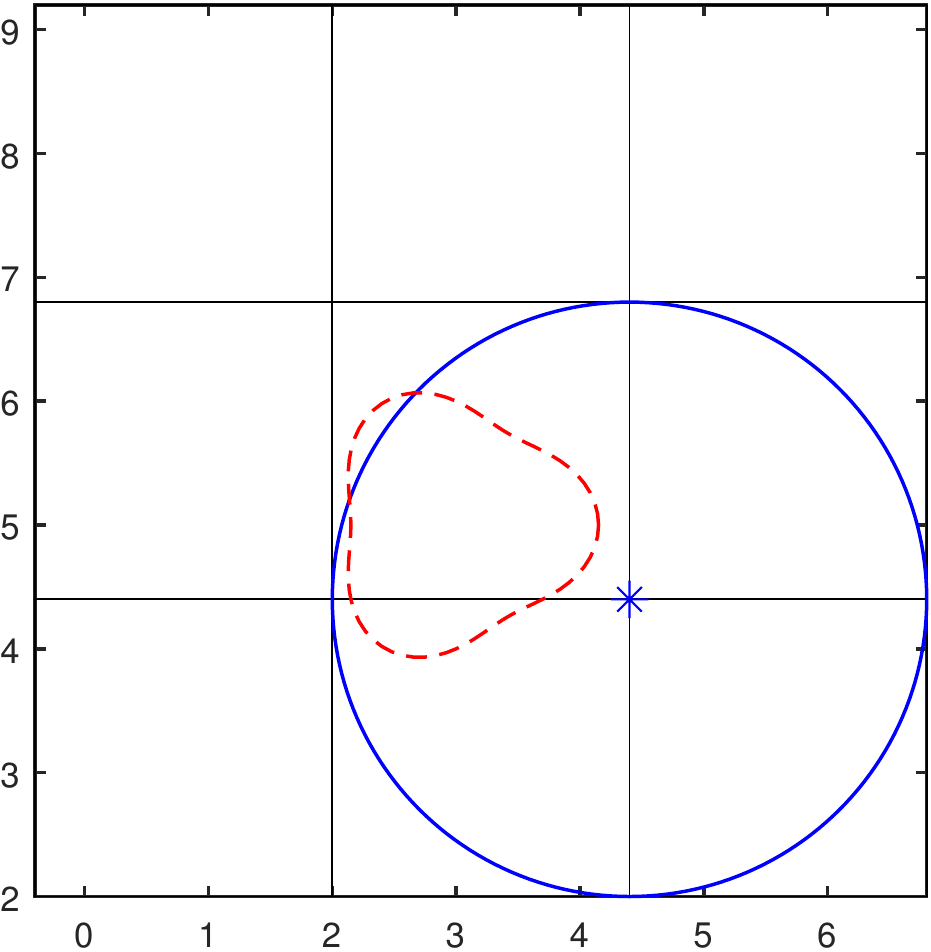}
\vspace{-0.9cm}
\begin{center}
(a)
\end{center}
\vspace{0.1cm}
\end{minipage}
\hspace{0.5cm}
\begin{minipage}[t]{0.40\linewidth}
\includegraphics[angle=0, width=\textwidth]{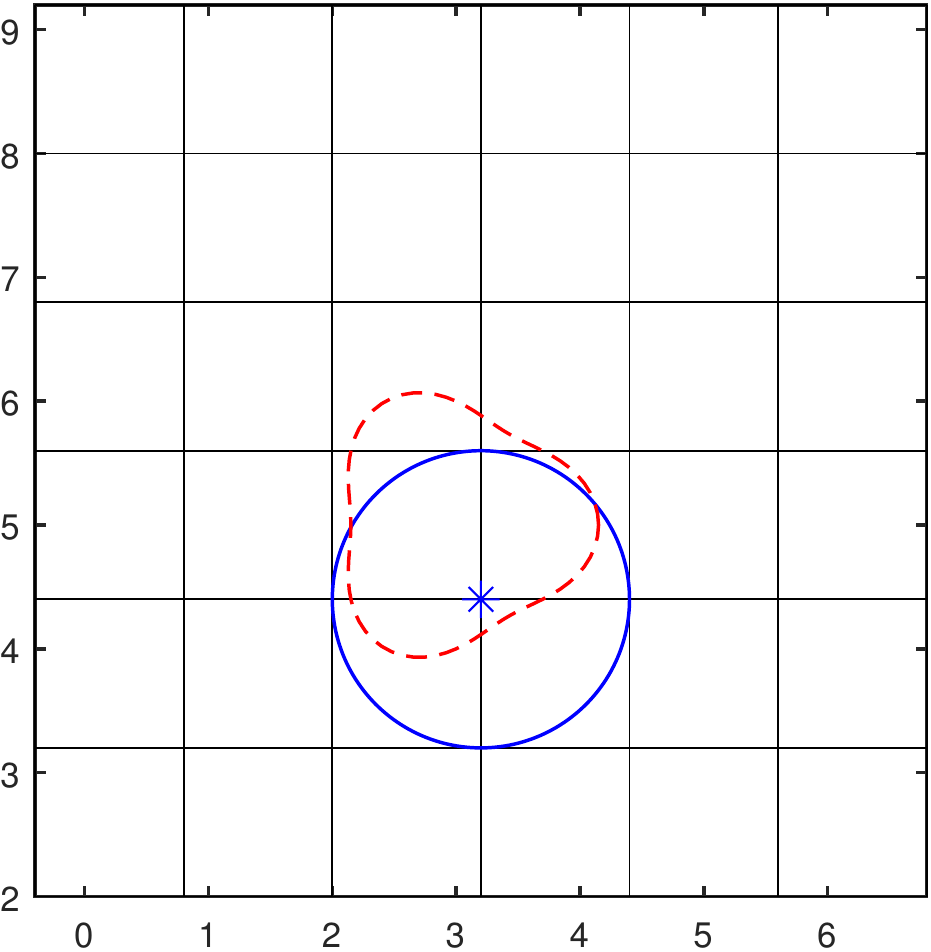}
\vspace{-0.9cm}
\begin{center}
(b)
\end{center}
\vspace{0.1cm}
\end{minipage}
\end{minipage}
\begin{minipage}[t]{\linewidth}
\begin{minipage}[t]{0.40\linewidth}
\includegraphics[angle=0, width=\textwidth]{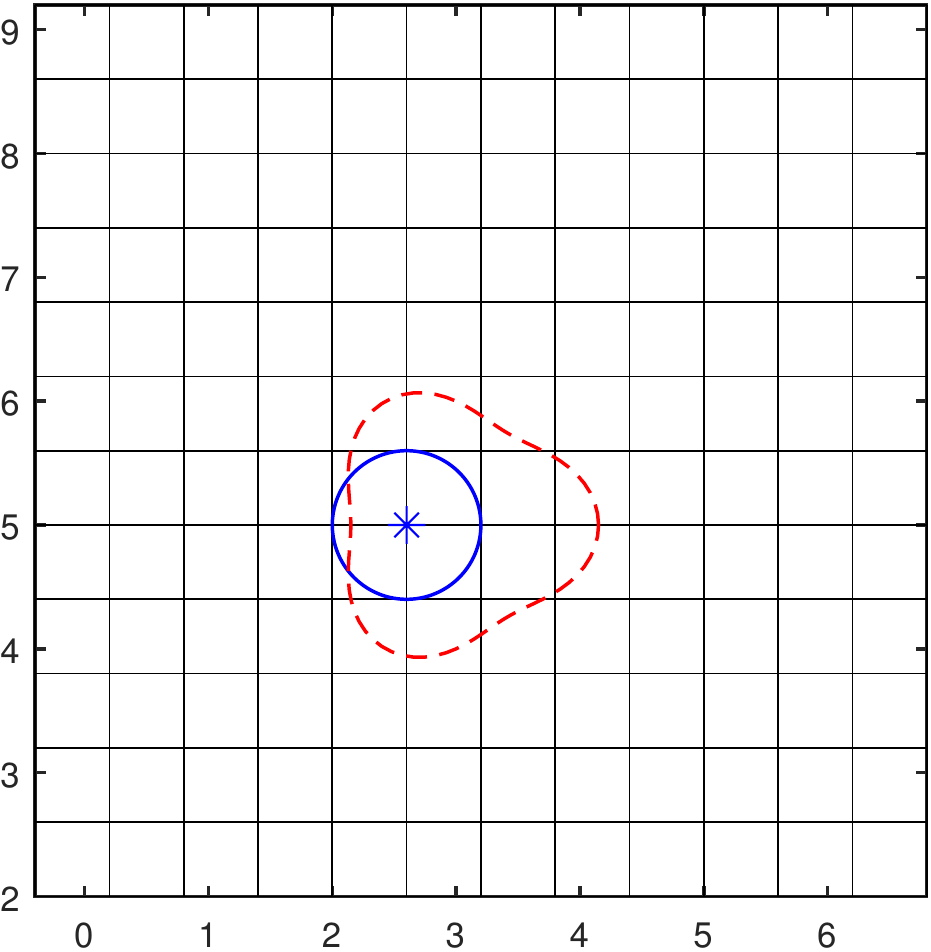}
\vspace{-0.9cm}
\begin{center}
(c)
\end{center}
\end{minipage}
\hspace{0.5cm}
\begin{minipage}[t]{0.40\linewidth}
\includegraphics[angle=0, width=\textwidth]{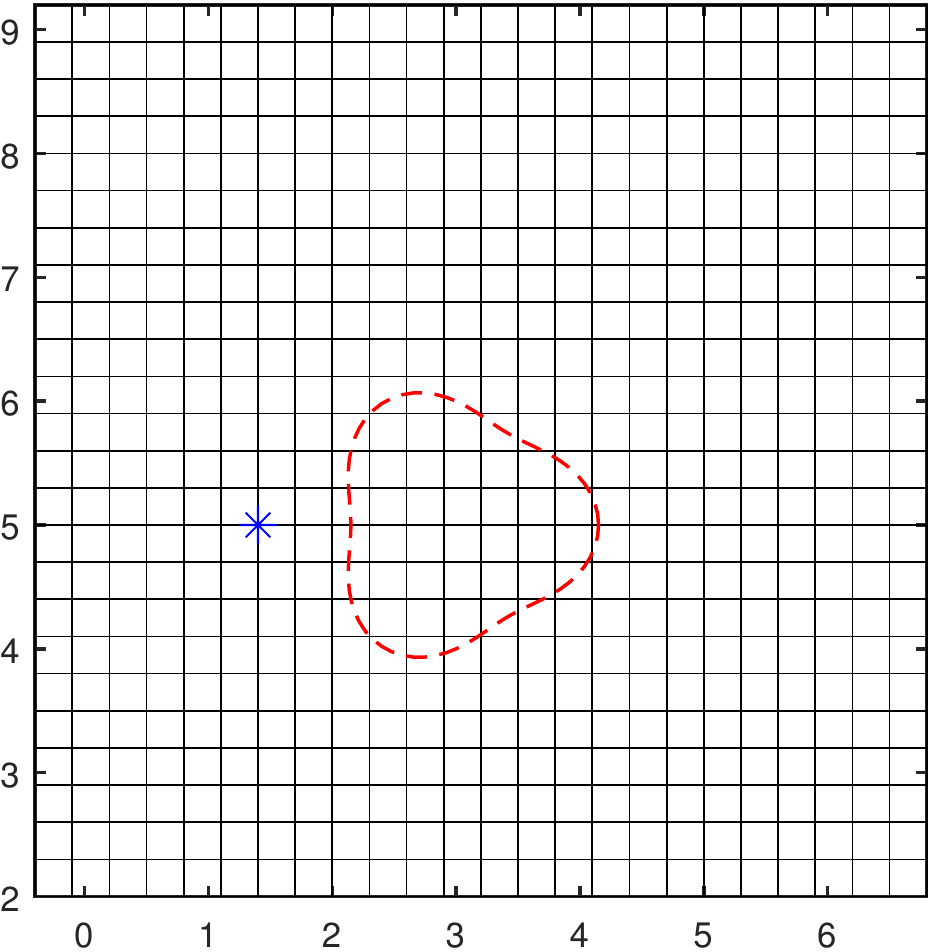}
\vspace{-0.9cm}
\begin{center}
(d)
\end{center}
\end{minipage}
\end{minipage}
\caption{\label{fig8}The multilevel ESM for the triangle with Dirichlet BC: (a) The radius $R$ of the sampling discs is $R=2.4$. (b) $R=1.2$. (c) $R=0.6$. (d) $R=0.3$}
\end{figure}

\begin{figure}[h!]
\begin{minipage}[t]{\linewidth}
\begin{minipage}[t]{0.40\linewidth}
\includegraphics[angle=0, width=\textwidth]{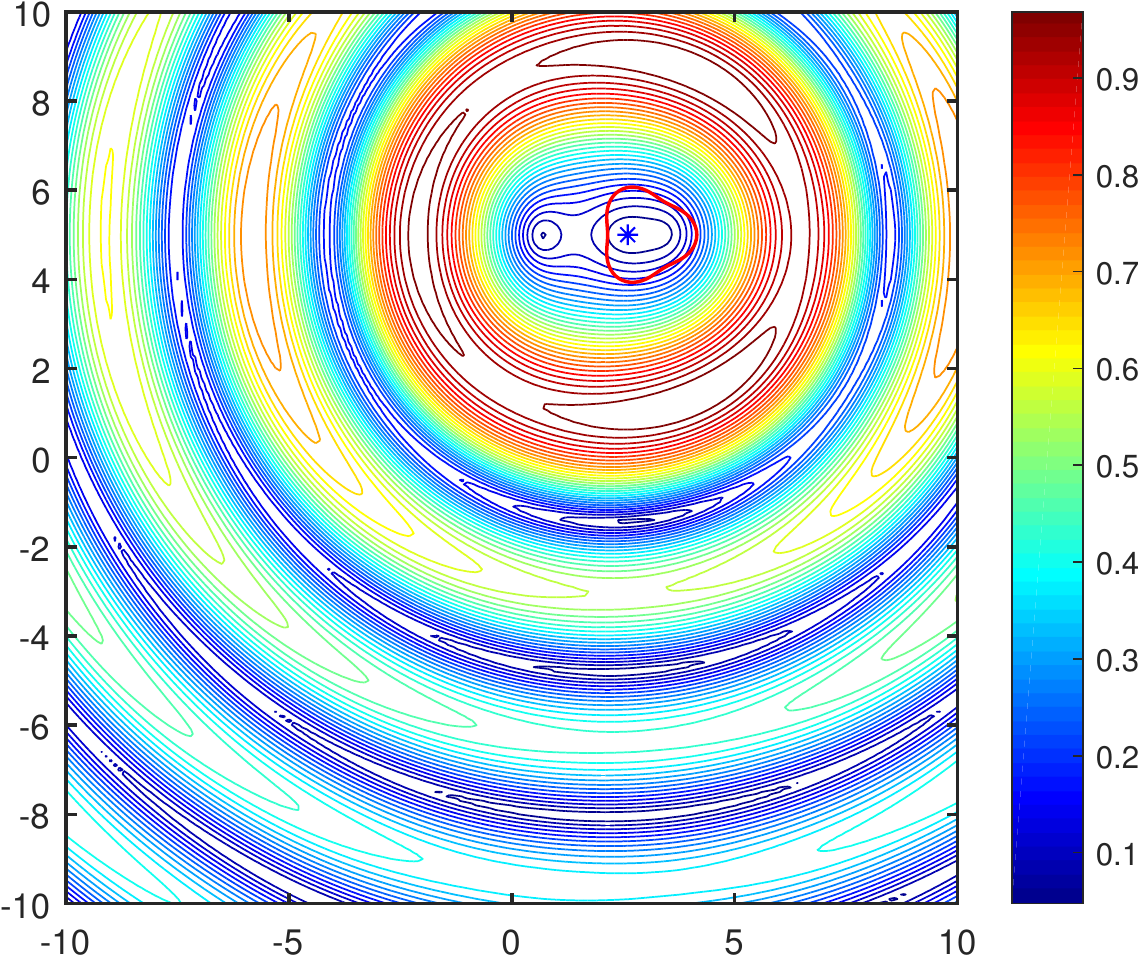}
\vspace{-0.9cm}
\begin{center}
(a)
\end{center}
\end{minipage}
\hspace{0.4cm}
\begin{minipage}[t]{0.40\linewidth}
\includegraphics[angle=0, width=0.83\textwidth]{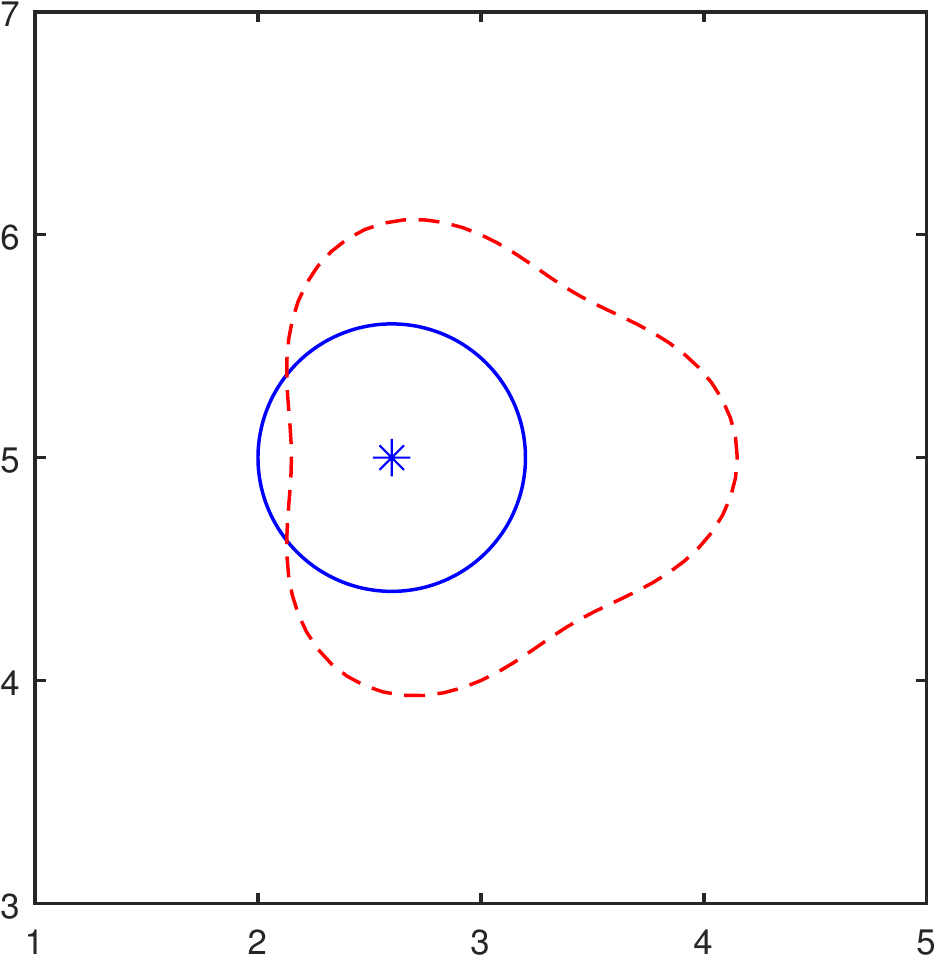}
\vspace{-0.4cm}
\begin{center}
(b)
\end{center}
\end{minipage}
\end{minipage}
\caption{\label{fig9} The multilevel ESM for the triangle with Dirichlet BC. (a) Contour plot. (b) Reconstruction.}
\end{figure}

\begin{figure}[h!]
\begin{minipage}[t]{\linewidth}
\begin{minipage}[t]{0.30\linewidth}
\includegraphics[angle=0, width=\textwidth]{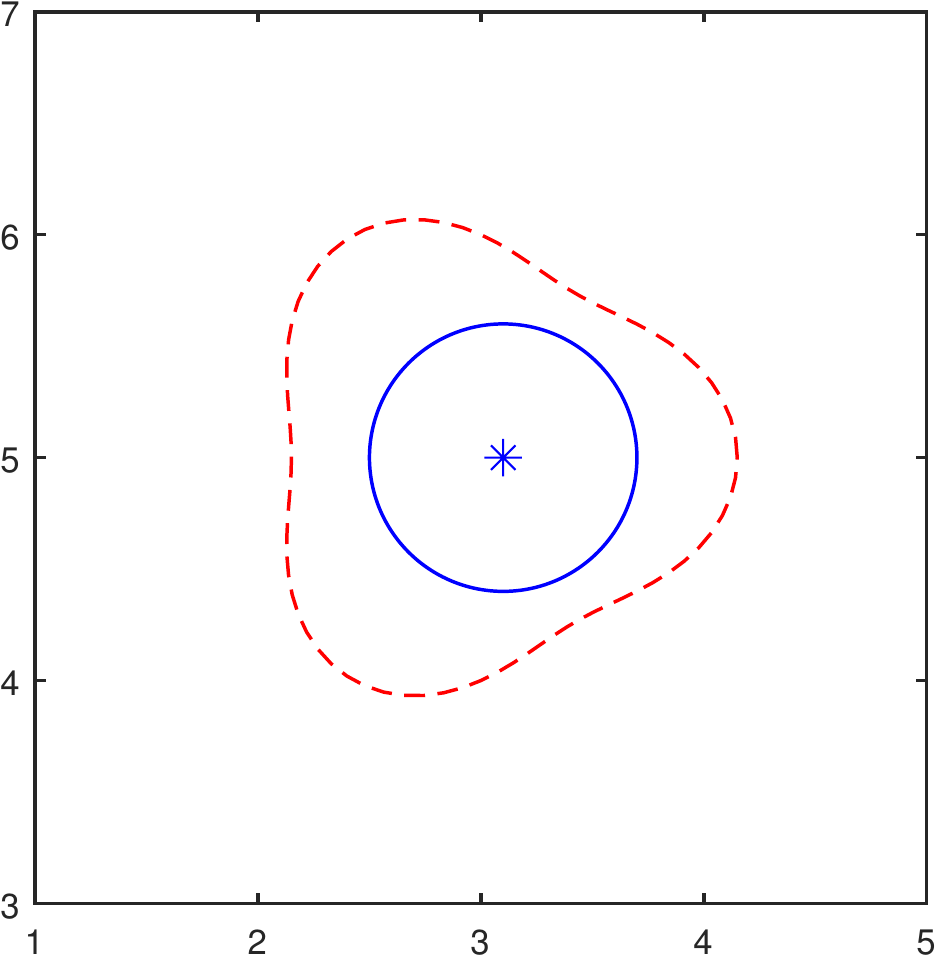}
\vspace{-0.9cm}
\begin{center}
(a)
\end{center}
\vspace{0.1cm}
\end{minipage}
\hspace{0.2cm}
\begin{minipage}[t]{0.30\linewidth}
\includegraphics[angle=0, width=\textwidth]{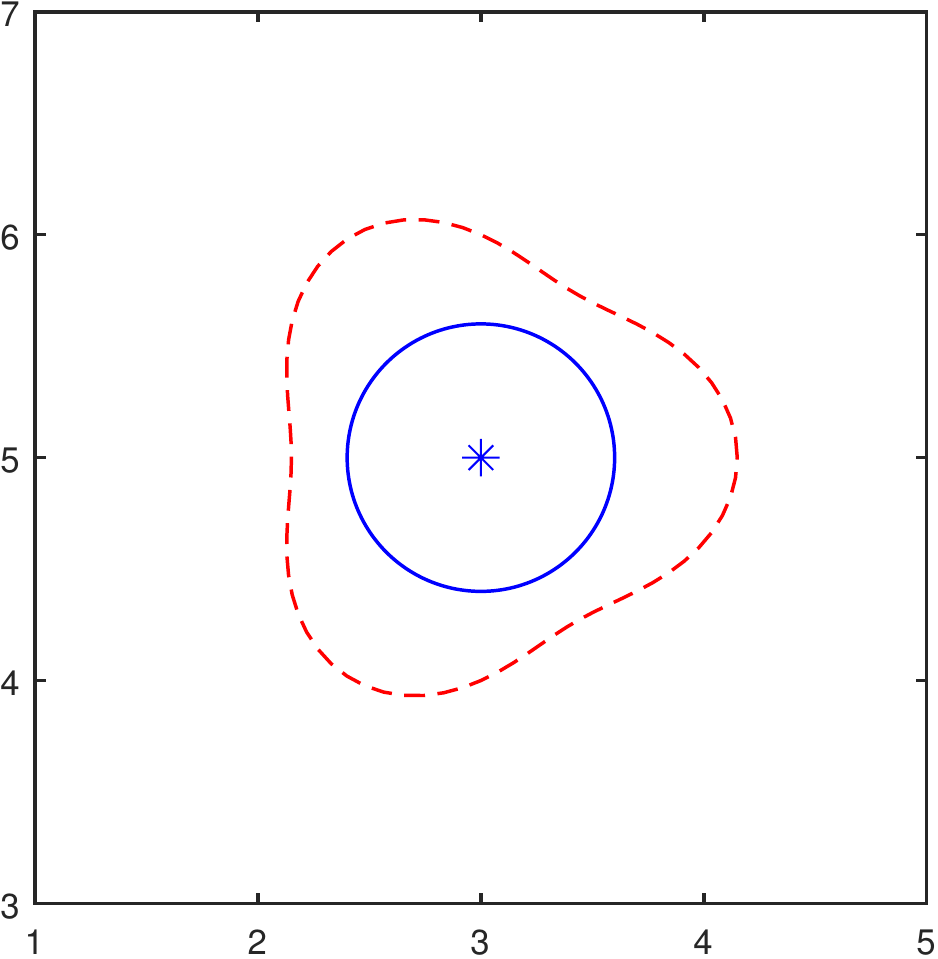}
\vspace{-0.9cm}
\begin{center}
(b)
\end{center}
\vspace{0.1cm}
\end{minipage}
\hspace{0.2cm}
\begin{minipage}[t]{0.30\linewidth}
\includegraphics[angle=0, width=\textwidth]{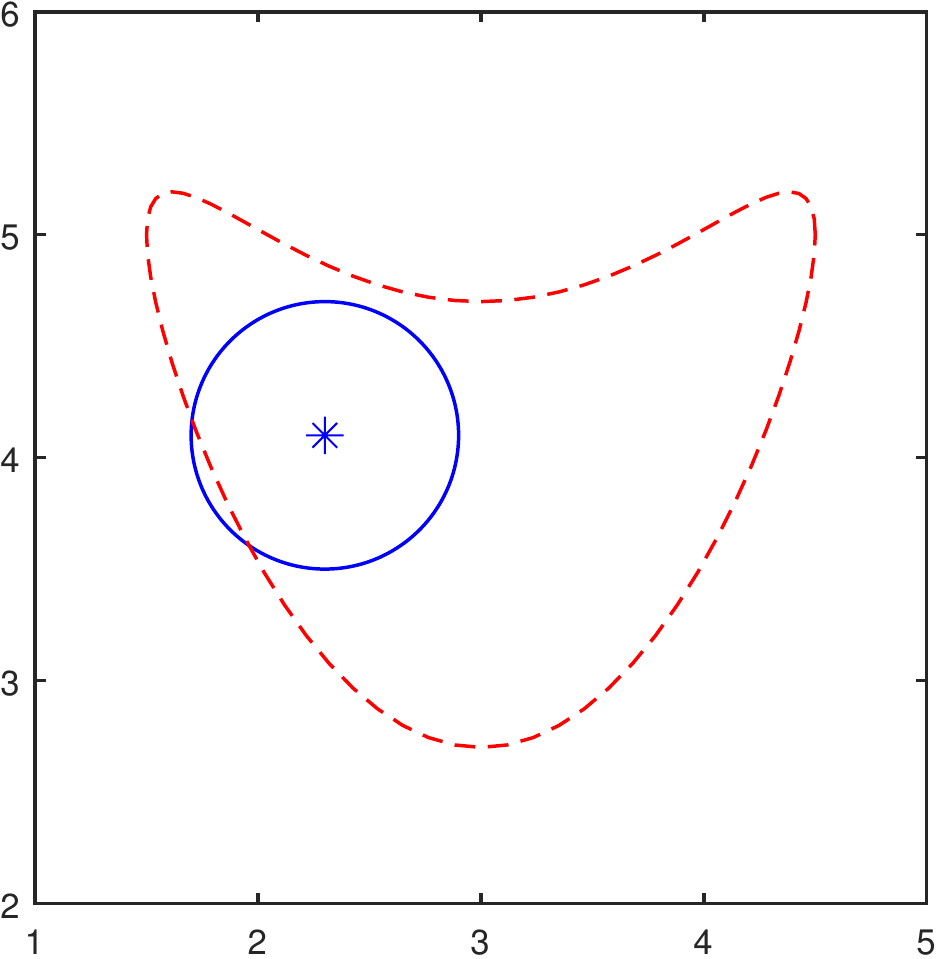}
\vspace{-0.9cm}
\begin{center}
(c)
\end{center}
\vspace{0.1cm}
\end{minipage}
\end{minipage}

\begin{minipage}[t]{\linewidth}
\begin{minipage}[t]{0.30\linewidth}
\includegraphics[angle=0, width=\textwidth]{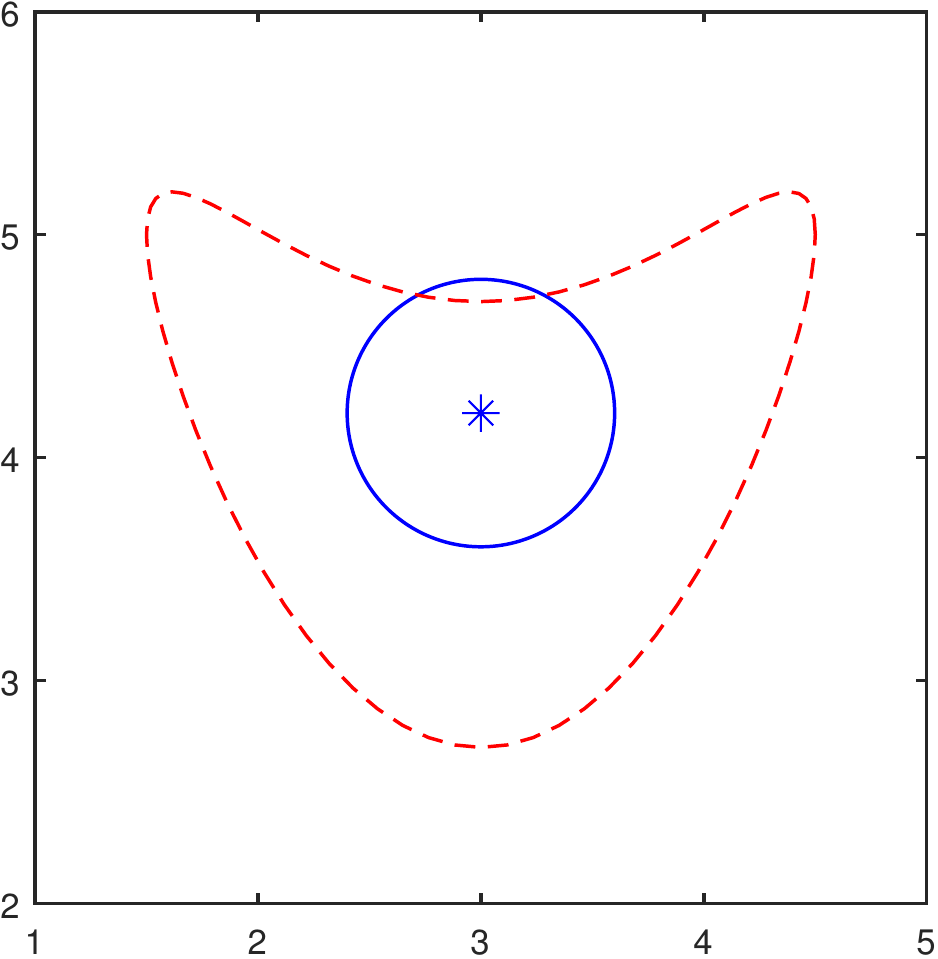}
\vspace{-0.9cm}
\begin{center}
(d)
\end{center}
\end{minipage}
\hspace{0.2cm}
\begin{minipage}[t]{0.30\linewidth}
\includegraphics[angle=0, width=\textwidth]{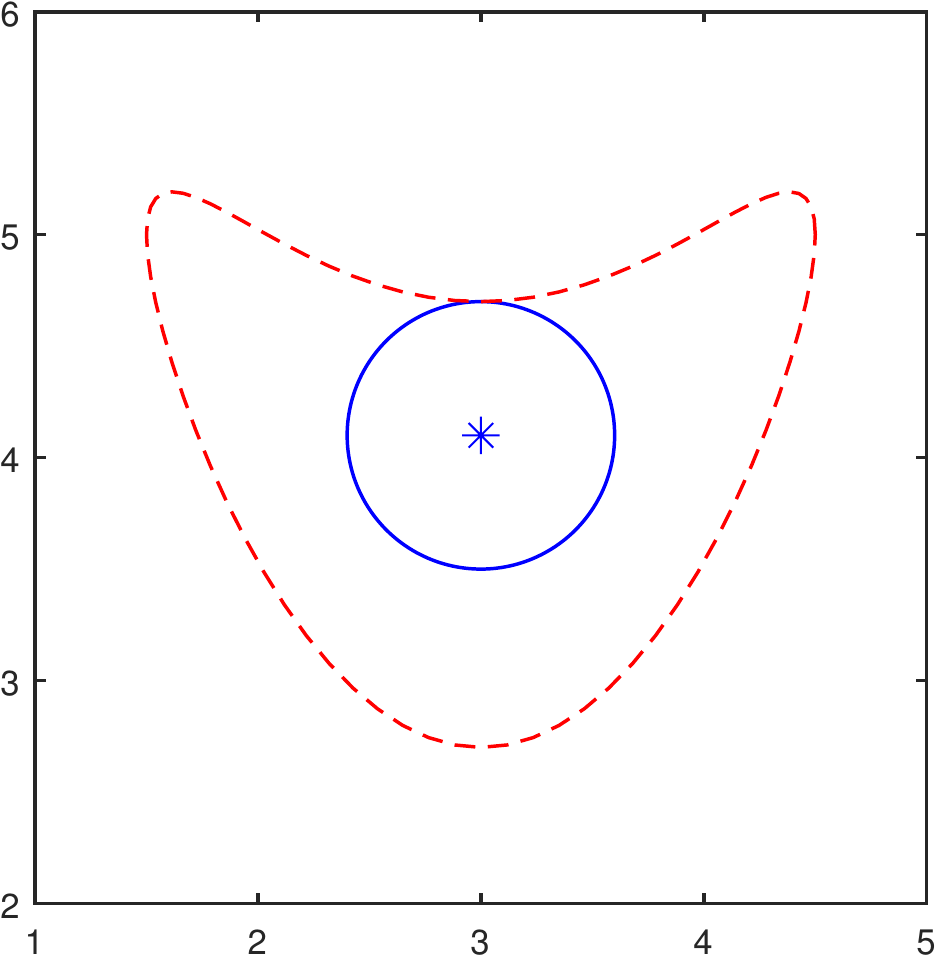}
\vspace{-0.9cm}
\begin{center}
(e)
\end{center}
\end{minipage}
\hspace{0.2cm}
\begin{minipage}[t]{0.30\linewidth}
\includegraphics[angle=0, width=\textwidth]{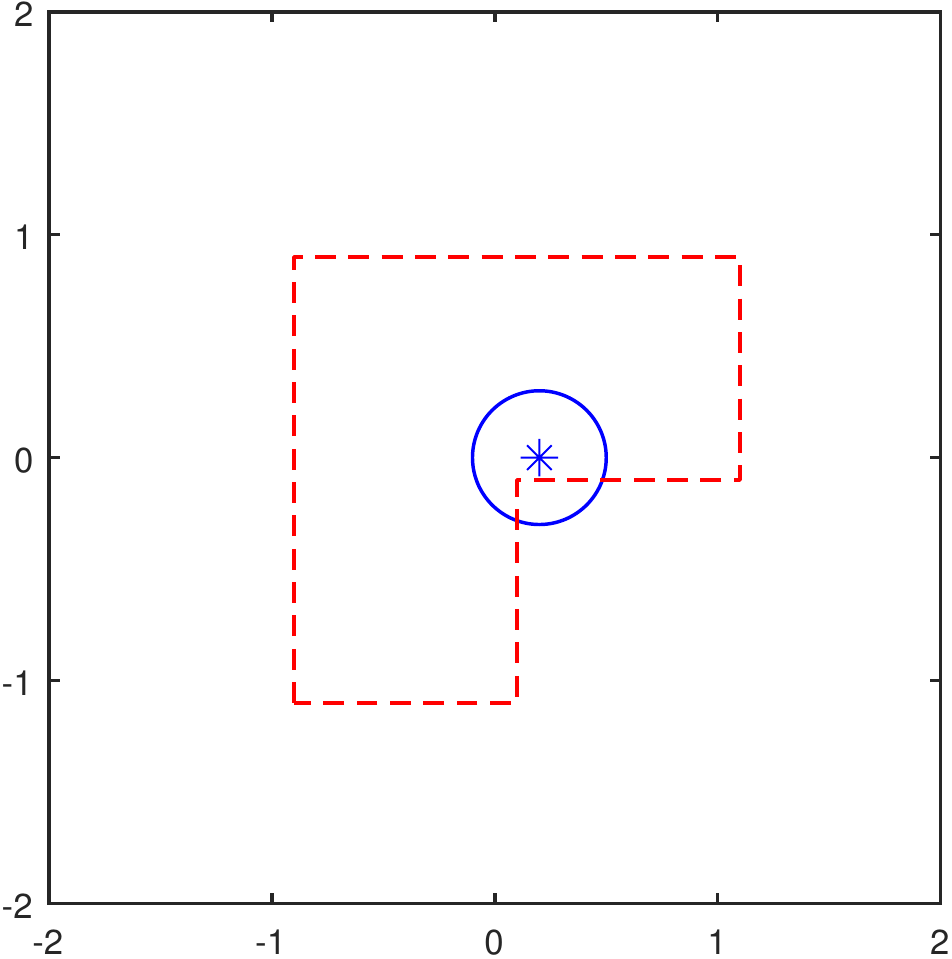}
\vspace{-0.9cm}
\begin{center}
(f)
\end{center}
\end{minipage}
\end{minipage}
\caption{\label{fig10} The reconstruction results of the multilevel ESM for (a) the triangle obstacle with Neumann BC; (b) the triangle obstacle with impedance BC; (c) the kite obstacle with Dirichlet BC; (d) the kite obstacle with Neumann BC; (e) the kite obstacle with impedance BC; and (f) the L-shaped inhomogeneous medium.}
\end{figure}

\section{Conclusion}\label{CF}
A new sampling method using far field data of one incident wave is proposed.
It can correctly obtain the location and approximate support of the scatter and is wavenumber independent.
In this paper, the sampling discs are sound-soft. One can also use sound-hard discs or discs with other boundary conditions. 
Furthermore, since the kernel of the far field equation is computed, the ESM can process various input data. 
In future, we plan to study the following variations of the ESM.

\begin{itemize}
\item{Limited Aperture Observation Data}

Let $\mathds{S}_0$ be a non-trivial proper subset of $\mathds{S}$. By analyticity, $u_\infty(\hat{x})$ for all observation directions $\hat{x}\in \mathds{S}$  
can be uniquely determined by $u_\infty(\hat{x})$ for limited observation directions $\hat{x}\in \mathds{S}_0$. It is still possible to employ the ESM
by solving the far field equations of limited observation data:
\begin{equation}\label{feS0}
\mathcal{F}_z g(\hat{x})=u^\infty(\hat{x}),\ \ \ \ \hat{x}\in \mathds{S}_0.
\end{equation}
Then the indicator for the sampling point $z$ is taken as
\begin{equation}\label{indicator}
I_z = \|g_z^\epsilon\|_{L^2}, \quad z \in T,
\end{equation}
where $g_z$ is the regularized solution of \eqref{feS0} for the partial far field data $u_\infty(\hat{x}), \hat{x}\in \mathds{S}_0$.

\item {Multiple Incident Direction Data}

For data for multiple incident directions $u_\infty(\hat{x}, d_i), \hat{x}\in \mathds{S}_0, i=1, \ldots, M$, one can solve the
far field equation \eqref{feS0} for each incident direction $d_i$. The indicator for a sampling point $z$ can be set as
\begin{equation}\label{indicatordi}
I_z = \sum_{i=1}^M I_z^{d_i} = \sum_{i=1}^M\|g_z^\epsilon(d_i)\|_{L^2}, \quad z \in T,
\end{equation}
where $g_z^{d_i}$ is the regularized solution of \eqref{feS0} for $u_\infty(\hat{x}, d_i), \hat{x}\in \mathds{S}_0$.

\item {Multiple Frequency Data}
In a similar way, the ESM can process multiple frequency data
\[
u_\infty(\hat{x}, d_i, k_j),\quad \hat{x}\in \mathds{S}_0, i=1, \ldots, M, j=1, \ldots, J.
\]
Again, one solves the far field equation \eqref{feS0} for each incident direction $d_i$ and each frequency $k_j$. 
The indicator for a sampling point $z$ is then
\begin{equation}\label{indicatordikj}
I_z = \sum_{j=1}^J \sum_{i=1}^M I_z^{d_i, k_j} = \sum_{j=1}^J \sum_{i=1}^M\|g_z^\epsilon(d_i, k_j)\|_{L^2}, \quad z \in T,
\end{equation}
where $g_z^{d_i, k_j}$ is the regularized solution of \eqref{feS0} for $u_\infty(\hat{x}, d_i, k_j), \hat{x}\in \mathds{S}_0$.

\end{itemize}

\section*{Acknowledgments}
The research of J. Liu was supported in part by the Guangdong Natural Science Foundation (No. 2016A030313074) and by
a research fund from Jinan University (No. 21617414). The research of J. Sun was partially supported by NSFC (No. 11771068)



\end{document}